\DeclareMathOperator*{\argmin}{argmin}
\DeclareMathAlphabet\mbc{OMS}{cmsy}{b}{n}
\newcommand{\sri}{\operatorname{sri}}
\newcommand{\gra}{\operatorname{gra}}
\newcommand{\zer}{\operatorname{zer}}
\newcommand{\Fix}{\operatorname{Fix}}
\newcommand{\ran}{\operatorname{ran}}
\newcommand{\dom}{\operatorname{dom}}
\newcommand{\Id}{\operatorname{Id}}
\newcommand{\Prox}{\operatorname{prox}}
\newcommand{\bs}{\boldsymbol}
\newcommand{\Hi}{\mathcal{H}}
\newcommand{\bHi}{\mbc{H}}
\newcommand{\R}{\mathbb{R}}
\def\tto{\rightrightarrows}
\Crefname{fact}{Fact}{Facts}
\Crefname{enumi}{}{}
\newtheorem{theorem}{Theorem}[section]
\newtheorem{definition}{Definition}[section]
\newtheorem{proposition}{Proposition}[section]
\newtheorem{corollary}{Corollary}[section]
\newtheorem{lemma}{Lemma}[section]
\newtheorem{fact}{Fact}[section]
\newtheorem{remark}{Remark}[section]
\newtheorem{example}{Example}[section]
\title{Computing the resolvent of the sum\\ of maximally monotone operators with the \\ averaged alternating modified reflections algorithm}
\author{Francisco J. Arag\'on Artacho\thanks{Department of Mathematics,
University of Alicante, \textsc{Spain}. e-mail:~\href{mailto:francisco.aragon@ua.es}{francisco.aragon@ua.es}}
        \and Rub\'en Campoy\thanks{Department of Mathematics,
University of Alicante, \textsc{Spain}. e-mail:~\href{mailto:ruben.campoy@ua.es}{ruben.campoy@ua.es}}
}
\begin{document}
\maketitle

\begin{abstract}
The averaged alternating modified reflections algorithm is a projection method for finding the closest point in the intersection of closed convex sets to a given point in a Hilbert space. In this work, we generalize the scheme so that it can be used to compute the resolvent of the sum of two maximally monotone operators. This gives rise to a new splitting method, which is proved to be strongly convergent. A standard product space reformulation permits to apply the method for computing the resolvent of a finite sum of maximally monotone operators. Based on this, we propose two variants of such parallel splitting method.
\end{abstract}

\paragraph*{Keywords} Maximally monotone operator $\cdot$ Resolvent $\cdot$ Averaged alternating modified reflections algorithm $\cdot$ Douglas--Rachford algorithm $\cdot$ Splitting method

\paragraph*{MSC2010:} 47H05 $\cdot$ 47J25 $\cdot$ 65K05 $\cdot$ 47N10

\section{Introduction}

The \emph{averaged alternating modified reflections (AAMR) algorithm} is a projection method that was recently  introduced in~\cite{AAMR} for solving best approximation problems in the convex setting. For the case of two nonempty, closed and convex sets $C_1$ and $C_2$ in a Hilbert space $\Hi$ with $C_1\cap C_2\neq\emptyset$, the corresponding best approximation problem consists in finding the closest point to a given point~$q\in\Hi$ in their intersection $C_1\cap C_2$, i.e.,
\begin{equation}\label{eq:bestaproxprob}
\text{Find } p\in C_1\cap C_2\text{ such that } \|p-q\|=\inf_{x\in C_1\cap C_2} \|x-q\|.
\end{equation}
For any initial point $x_0\in\Hi$, the AAMR algorithm is iteratively defined by
\begin{equation}\label{eq:aamr_intro}
x_{n+1}:=(1-\alpha)x_n+\alpha(2\beta P_{C_2-q}-\Id)(2\beta P_{C_1-q} - \Id)(x_n), \quad n=0,1,2,\ldots,
\end{equation}
where $P_C$ and $\Id$ denote the projector onto the set $C$ (see~\cref{ex:prox_proj}\cref{ex:proj}) and the identity mapping, respectively. When $\alpha,\beta\in{]0,1[}$, under the constraint qualification
\begin{equation}\label{eq:strongCHIP_intro}
q\in (\Id+N_{C_1}+N_{C_2})(P_{C_1\cap C_2}(q)),
\end{equation}
where $N_{C_1}$ and $N_{C_2}$ denote the normal cones (see~\cref{ex:subdif_normalcone}\cref{ex:normalcone}) to $C_1$ and $C_2$, respectively, the generated sequence $(x_n)_{n=0}^\infty$ is weakly convergent to a point $x^\star$ such that $P_{C_1}(x^\star+q)=P_{C_1\cap C_2}(q)$, which solves problem~\eqref{eq:bestaproxprob}. Furthermore, the \emph{shadow sequence} $\left(P_{C_1}(x_n+q)\right)_{n=0}^\infty$ is strongly convergent to the solution $P_{C_1\cap C_2}(q)$ of~\eqref{eq:bestaproxprob}, see~\cite[Theorem~4.1]{AAMR}.

The rate of convergence of the AAMR algorithm for the case of two subspaces has been recently analyzed in~\cite{AAMR_rate}. If the algorithm is run with an optimal selection of its parameters $\alpha$ and $\beta$, its rate of convergence was shown to be better than the one of other projection methods. In a more practical context, the AAMR algorithm has been recently employed in~\cite{BBK18} to solve a continuous-time optimal control problem, under the name Arag\'on Artacho--Campoy algorithm (AAC). Their numerical results show a very good performance of the algorithm, compared to the other methods considered.

The AAMR algorithm can be viewed as a modification of the so-called \emph{Douglas--Rachford (DR)} algorithm~\cite{DR56} (also known as \emph{averaged alternating reflections method}), which is defined as in~\eqref{eq:aamr_intro} for $\beta=1$ and $q=0$. This iterative method only solves feasibility problems of the form
\begin{equation}\label{eq:feasprob}
\text{Find } x\in C_1\cap C_2,
\end{equation}
rather than best approximation problems of the type~\eqref{eq:bestaproxprob}. With no constraint qualification needed, the sequence generated by DR is weakly convergent to a point $x^\star$ such that $P_{C_1}(x^\star)\in C_1\cap C_2$, which thus solves~\eqref{eq:feasprob}. In this case, the shadow sequence $\left(P_{C_1}(x_n)\right)_{n=0}^\infty$ is only proved to be weakly convergent to the solution $P_{C_1}(x^\star)$, see~\cite{Svaiter}.

The Douglas--Rachford scheme can be more generally applied to monotone operators~\cite{LM79}. In this context, the DR algorithm can be used to solve problems of the form
\begin{equation}\label{eq:zersumprob}
\text{Find } x\in\zer(A+B)=\{x\in\Hi \mid 0\in Ax+Bx\},
\end{equation}
where $A,B:\Hi\tto\Hi$ are maximally monotone operators. The general structure of the iteration is the same as in the feasibility context, but replacing the projectors onto the sets with the resolvents $J_A$ and $J_B$ of the operators (see~\cref{def:resolvent}), i.e.,
\begin{equation}\label{eq:DRsplit_intro}
x_{n+1}:=(1-\alpha)x_n+\alpha(2J_{A}-\Id)(2J_{B} - \Id)(x_n), \quad n=0,1,2,\ldots.
\end{equation}
In fact, the feasibility problem~\eqref{eq:feasprob} can be written in the form~\eqref{eq:zersumprob} by taking $A=N_{C_1}$ and $B=N_{C_2}$. Since the resolvent of a normal cone to a convex set coincides with the projector onto the set, then~\eqref{eq:DRsplit_intro} becomes the DR iteration for solving feasibility problems.

The objective of this work is to extend the AAMR scheme to the more general context of maximally monotone operators. Given a point $q$ in the domain of $J_{A+B}$ (i.e., in the range of $A+B+\Id$), the generalized version of the best approximation problem~\eqref{eq:bestaproxprob} can be stated as,
\begin{equation}\label{eq:sumprob}
\text{Find } p=J_{A+B}(q),
\end{equation}
for some maximally monotone operators $A,B:\Hi\tto\Hi$. This is indeed a generalization of the best approximation problem~\eqref{eq:bestaproxprob}. Note that, if the constraint qualification~\eqref{eq:strongCHIP_intro} holds, we have that
\begin{equation*}
P_{C_1\cap C_2}(q)=J_{N_{C_1}+N_{C_2}}(q),
\end{equation*}
and thus~\eqref{eq:sumprob} becomes~\eqref{eq:bestaproxprob}.

The AAMR method can be naturally extended from the convex feasibility framework to the context of maximally monotone operators by considering modified reflectors instead of reflectors in the Douglas--Rachford splitting algorithm~\eqref{eq:DRsplit_intro}, i.e.,
\begin{equation}\label{eq:AAMRsplit_intro}
x_{n+1}:=(1-\alpha)x_n+\alpha(2\beta J_{B}-\Id)(2\beta J_{A} - \Id)(x_n), \quad n=0,1,2,\ldots,
\end{equation}
with $\beta\in{]0,1[}$. The analysis of AAMR for monotone operators~\eqref{eq:AAMRsplit_intro} presented in this work is inspired by the work of Combettes~\cite{C09}, where a different iterative construction of the resolvent of the sum is presented. Our analysis consists in reformulating the AAMR iteration so that it can be viewed as the one generated by the DR splitting algorithm for finding a zero of the sum of an appropriate modification of the operators. Another iterative approach can be found in~\cite{BC08}, where a Dykstra-like algorithm is developed. In~\cite{C11}, or the more recent work~\cite{ABC17}, the particular case of proximity mappings (see~\cref{ex:prox_proj}\cref{ex:prox}) is tackled.

The remainder of the paper is structured as follows. We give a short overview in \cref{sec:prelim} of some preliminary concepts and basic results about monotone operators. 
The extension of the AAMR method for computing the resolvent of the sum of two maximally monotone operators is given in~\cref{sec:AAMR}. Finally, in~\cref{sec:Parallel}, we use a product space reformulation to derive two different parallel splitting versions of the method to deal with an arbitrary finite family of operators.

\section{Preliminaries}\label{sec:prelim}

Throughout this paper, $\Hi$ is a real Hilbert space equipped with inner product $\langle\cdot , \cdot\rangle$ and induced norm $\|\cdot\|$. We abbreviate \emph{norm convergence} of sequences in $\Hi$ with $\to$  and we use $\rightharpoonup$ for \emph{weak convergence}. Given a set-valued operator $A:\Hi\tto\Hi$, the \emph{graph}, the \emph{domain}, the \emph{range}, the set of \emph{fixed points} and the set of \emph{zeros} of A, are denoted, respectively, by $\gra A$, $\dom A$, $\ran A$, $\Fix A$ and $\zer A$; i.e.,
\begin{gather*}
\gra A:=\left\{(x,u)\in\Hi\times\Hi : u\in A(x)\right\},\quad \dom A:=\left\{x\in\Hi : A(x)\neq\emptyset\right\},\\
\ran A:=\left\{x\in\Hi : x\in A(z) \text{ for some } z\in\Hi  \right\},\\
\Fix A:=\left\{x\in\Hi : x\in A(x)\right\}\quad
\text{and} \quad \zer A:=\left\{x\in\Hi : 0\in A(x)\right\}.
\end{gather*}

\begin{definition}
An operator $A:\Hi\tto\Hi$ is said to be
\begin{enumerate}[label=(\roman*)]

\item \emph{monotone} if
\begin{equation*}
\langle x-y,u-v\rangle\geq 0,\quad\forall (x,u),(y,v)\in\gra A;
\end{equation*}
\item \emph{maximally monotone} if it is monotone and there exists no monotone operator $B:\Hi\tto\Hi$ such that $\gra B$ properly contains $\gra A$; i.e., for every $(x,u)\in\Hi\times\Hi$,
\begin{equation*}
(x,u)\in\gra A \quad \Leftrightarrow \quad \langle x-y,u-v\rangle\geq 0,\; \forall (y,v)\in\gra A;
\end{equation*}
\item $\mu$-\emph{strongly monotone} for $\mu>0$, if $A-\mu I$ is monotone; i.e.,
\begin{equation*}
 \langle x-y,u-v\rangle\geq \mu\|x-y\|^2,\quad \forall (x,u),(y,v)\in\gra A.
\end{equation*}
\end{enumerate}
\end{definition}
Two well-known examples of maximally monotone operators are given next.
\begin{example}[The subdifferential and the normal cone operators] \label{ex:subdif_normalcone}\hfill
\begin{enumerate}[label=(\roman*)]
\item Let $f:\Hi\to{]-\infty,+\infty]}$ be a proper, lower semicontiuous and convex function. The \emph{subdifferential} of $f$, which is the operator $\partial f:\Hi\tto\Hi$ defined by
\begin{equation*}
\partial f(x):=\left\{ u\in\Hi :  \langle y-x, u\rangle +f(x)\leq f(y), \quad \forall y\in\Hi\right\},
\end{equation*}
is maximally monotone (see, e.g.,~\cite[Theorem~20.40]{BC11}).\label{ex:subdif}

\item Let $C$ be a nonempty, closed and convex subset of $\Hi$. The \emph{normal cone} to $C$, which is the operator $N_C:\Hi\tto\Hi$ defined by
\begin{equation*}
N_C(x):=\left\{\begin{array}{ll}\{u\in\Hi : \langle u, c-x \rangle\leq 0, \, \forall c\in C \}, &\text{if }x\in C,\\
\emptyset, & \text{otherwise,}\end{array}\right.
\end{equation*}
is maximally monotone (see, e.g.,~\cite[Example~20.41]{BC11}).\label{ex:normalcone}
\end{enumerate}
\end{example}

The following lemma shows the preservation  of (maximal) monotonicity under affine transformations. The proof is straightforward and omitted for brevity.

\begin{lemma}\label{fact:prop_monotone}
Let $A:\Hi\tto\Hi$ be (maximally) monotone, let ${w,z\in\Hi}$ and let $\gamma,\lambda\in\R$ such that $\gamma\lambda>0$. Then, the operator $\widetilde{A}:\Hi\tto\Hi$, defined for any $x\in\Hi$ by
$$\widetilde{A}(x):=w+\gamma A(\lambda x+z),$$
is (maximally) monotone.
\end{lemma}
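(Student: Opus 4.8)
The plan is to work directly at the level of graphs. The decisive observation is that $\gra\widetilde A$ is simply an affine image of $\gra A$: unravelling the definition, a pair $(x,\tilde u)$ belongs to $\gra\widetilde A$ precisely when $\tilde u\in w+\gamma A(\lambda x+z)$, that is, when $(\lambda x+z,\gamma^{-1}(\tilde u-w))\in\gra A$. Here I would first record that $\gamma\lambda>0$ forces $\gamma\neq 0$ and $\lambda\neq 0$, so that both substitutions $s=\lambda x+z$ (on inputs) and $r=\gamma^{-1}(\tilde u-w)$ (on outputs) are invertible, and moreover $\gamma/\lambda=\gamma\lambda/\lambda^2>0$.

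For monotonicity, take two pairs $(x,\tilde u),(y,\tilde v)\in\gra\widetilde A$ and write $\tilde u=w+\gamma u$, $\tilde v=w+\gamma v$ with $u\in A(\lambda x+z)$ and $v\in A(\lambda y+z)$. A one-line computation gives
\[
\langle x-y,\tilde u-\tilde v\rangle=\gamma\langle x-y,u-v\rangle=\frac{\gamma}{\lambda}\big\langle(\lambda x+z)-(\lambda y+z),\,u-v\big\rangle,
\]
and since $(\lambda x+z,u)$ and $(\lambda y+z,v)$ lie in $\gra A$, monotonicity of $A$ makes the inner product on the right nonnegative; multiplying by $\gamma/\lambda>0$ preserves the sign. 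This is the whole argument for the monotone case.

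For maximality I would use the characterization recorded in the definition of maximal monotonicity. Assume $A$ is maximally monotone and suppose a pair $(x,\tilde u)$ satisfies $\langle x-y,\tilde u-\tilde v\rangle\ge 0$ for every $(y,\tilde v)\in\gra\widetilde A$. Setting $a:=\lambda x+z$ and $r:=\gamma^{-1}(\tilde u-w)$, I would show that $(a,r)$ satisfies the monotonicity inequality against every element of $\gra A$: given any $(b,s)\in\gra A$, the pair $(y,\tilde v):=(\lambda^{-1}(b-z),\,w+\gamma s)$ lies in $\gra\widetilde A$, and the same change-of-variables identity turns the hypothesis into $(\gamma/\lambda)\langle a-b,r-s\rangle\ge 0$, whence $\langle a-b,r-s\rangle\ge 0$. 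By maximality of $A$ this yields $(a,r)\in\gra A$, i.e.\ $\tilde u\in w+\gamma A(\lambda x+z)$, so $(x,\tilde u)\in\gra\widetilde A$; thus $\widetilde A$ is maximal.

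The step that needs the most care---and the only place where invertibility is genuinely used---is the claim that as $(b,s)$ ranges over all of $\gra A$, the associated $(y,\tilde v)$ ranges over all of $\gra\widetilde A$; this surjectivity of the correspondence is exactly what lets me feed an arbitrary element of $\gra A$ into the hypothesis, and it fails without $\lambda\neq 0$. Everything else is the bookkeeping of the affine change of variables, which is why preservation of plain monotonicity is immediate and only the maximality part truly exploits the bijective structure of the substitution.
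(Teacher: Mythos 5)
The paper gives no proof of this lemma at all --- it is declared ``straightforward and omitted for brevity'' --- so there is nothing to match your argument against; the relevant question is only whether your proof is correct, and it is. Your key observation, that $(x,\tilde u)\in\gra\widetilde A$ if and only if $\bigl(\lambda x+z,\gamma^{-1}(\tilde u-w)\bigr)\in\gra A$, sets up an affine bijection between the two graphs under which the monotonicity inner product transforms by the positive factor $\gamma/\lambda$; this immediately gives preservation of monotonicity, and your maximality argument correctly uses the characterization that the paper itself builds into its definition of maximal monotonicity (a pair monotonically related to the whole graph must belong to the graph), pulling an arbitrary $(b,s)\in\gra A$ back to $(\lambda^{-1}(b-z),\,w+\gamma s)\in\gra\widetilde A$ --- the step where $\lambda\neq 0$, guaranteed by $\gamma\lambda>0$, is genuinely needed. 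This is almost certainly the ``straightforward'' argument the authors had in mind. It is worth noting that the other natural route to maximality, via Minty's theorem (the paper's Fact~2.1, i.e.\ showing $\ran(\Id+\widetilde A)=\Hi$), is less self-contained here: solving $x+w+\gamma u=c$ with $u\in A(\lambda x+z)$ reduces to surjectivity of $\Id+\gamma\lambda A$, which already presupposes that positive scalar multiples of maximally monotone operators are maximally monotone --- a special case of the very lemma being proved. Your graph-based argument avoids that circularity, which is a point in its favor.
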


A very useful characterization of maximal monotonicity is provided by the following fundamental result due to Minty~\cite{Minty}.

\begin{fact}[Minty's theorem]\label{fact:Minty}
Let $A:\Hi\tto \Hi$ be monotone. Then,
$$A \text{ is maximally monotone} \quad \Leftrightarrow \quad \ran(\Id+A)=\Hi.$$
\end{fact}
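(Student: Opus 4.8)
The statement is an equivalence whose two implications are of very different character, so the plan is to dispatch the implication $\ran(\Id+A)=\Hi \Rightarrow A$ maximally monotone by the elementary Minty trick, and to treat the converse surjectivity statement as the real content. For the easy implication, suppose $A$ is monotone with $\ran(\Id+A)=\Hi$ and let $(x,u)\in\Hi\times\Hi$ be monotonically related to $\gra A$, i.e.\ $\langle x-y,u-v\rangle\ge 0$ for all $(y,v)\in\gra A$; by the characterization of maximal monotonicity recalled above, it suffices to show $(x,u)\in\gra A$. Since $x+u\in\Hi=\ran(\Id+A)$, I can pick $(y,v)\in\gra A$ with $y+v=x+u$, whence $x-y=v-u$ and therefore $0\le\langle x-y,u-v\rangle=-\|u-v\|^2$, which forces $u=v$ and hence $x=y$. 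Thus $(x,u)=(y,v)\in\gra A$, as required, and this direction is complete.

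For the converse, assume $A$ is maximally monotone; I must show $\Id+A$ is onto. Using \cref{fact:prop_monotone} to absorb a translation (replacing $A$ by $A(\cdot)-z$, which remains maximally monotone), it is enough to prove that $0\in\ran(\Id+A)$ for an arbitrary maximally monotone $A$, i.e.\ that $-x\in A(x)$ is solvable. I first record two facts needing only monotonicity: the resolvent $J_A=(\Id+A)^{-1}$ is single-valued and nonexpansive on $C:=\ran(\Id+A)$ -- indeed, if $z=x_1+u_1=x_2+u_2$ with $u_i\in A(x_i)$, then $u_1-u_2=-(x_1-x_2)$ and monotonicity gives $-\|x_1-x_2\|^2\ge 0$, and an analogous computation yields the Lipschitz estimate. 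Since $\dom A\neq\emptyset$, the set $C$ is nonempty. It is also closed: if $z_n\in C$ with $z_n\to z$, write $x_n=J_A z_n$ and $u_n=z_n-x_n\in A(x_n)$; nonexpansiveness makes $(x_n)$ Cauchy, so $x_n\to x$ and $u_n\to z-x=:u$, and the strong closedness of $\gra A$ (immediate from maximality, by passing the monotone relation to the limit) gives $(x,u)\in\gra A$, whence $z\in C$.

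The remaining and genuinely hard step is to show that $C$ is also open; granting this, $C$ is a nonempty open and closed subset of the connected space $\Hi$, so $C=\Hi$ and surjectivity follows. To prove openness -- a local surjectivity statement around any $z_0\in C$ -- I would set up a fixed-point argument exploiting the firm nonexpansiveness of $J_A$ (equivalently, the resolvent identity relating $J_{\lambda A}$ for different $\lambda>0$), so that solving $z\in x+A(x)$ for $z$ near $z_0$ reduces to a strict contraction on a ball. I expect this openness/local-surjectivity step to be the main obstacle: the naive reformulation of $z\in\ran(\Id+A)$ is circular, and one really needs the quantitative firm nonexpansiveness to manufacture the contraction. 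An alternative route, which I would pursue if the contraction proves unwieldy, is to first solve the equation in finite-dimensional subspaces via a Brouwer-type (or Debrunner--Flor) fixed-point argument, after using monotonicity and the coercivity of $\Id+A$ to confine solutions to a fixed ball, and then pass to the Hilbert-space limit; here the delicate point is upgrading a weak subsequential limit to an actual solution using weak compactness of the ball together with the demiclosedness of $\gra A$.
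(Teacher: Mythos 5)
Your first implication is fine: the argument that surjectivity of $\Id+A$ plus monotonicity forces maximality is the standard Minty trick, carried out correctly and completely. The problem is the converse, which is where all the analytic content of the theorem lives, and which your proposal does not actually prove: it is a plan with two candidate routes, neither executed. Your primary route (show $C=\ran(\Id+A)$ is nonempty, closed and open, then invoke connectedness of $\Hi$) founders on exactly the step you flag. Openness of $C$ at a point $z_0$ is a local surjectivity statement, and after the translation reduction you already performed it is literally equivalent to the full theorem; so any proof of openness must contain the whole difficulty, and firm nonexpansiveness of $J_A$ on $C$ cannot supply it. Concretely, the only parametrization available to you is $x=J_A(w)$, $u=w-J_A(w)$ for $w\in C$, and then $x+u=z$ if and only if $w=z$, which is the circularity you acknowledge: there is no self-map of $C$ whose fixed points are solutions, because producing the needed point of $C$ near $z_0$ is precisely what is being asked. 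Nor can you shortcut via convexity of the range: near-convexity of $\ran(\Id+A)$ is itself a consequence of Minty's theorem, not an input to it.

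Your fallback route (solve on finite-dimensional subspaces by a Brouwer/Debrunner--Flor argument, bound the solutions by monotonicity, then pass to a weak limit) is a genuine classical proof, but as written it is only gestured at, and the step you call ``delicate'' is genuinely so: the graph of a maximally monotone operator is not weakly$\times$weakly closed, so the weak subsequential limit must be identified as a solution by the asymptotic form of the Minty trick, testing against every $(y,v)\in\gra A$ over a directed family of finite-dimensional subspaces (nets, not sequences) and using weak lower semicontinuity of the norm, before maximality can be invoked. Since neither route is carried through, the implication ``maximal $\Rightarrow$ $\ran(\Id+A)=\Hi$'' remains unproven. For comparison, the paper does not prove the statement at all; it cites \cite[Theorem~21.1]{BC11}, whose proof is convex-analytic (Fitzpatrick function plus Fenchel--Rockafellar duality) and thereby avoids both fixed-point theory and finite-dimensional approximation entirely --- a route quite different from either of yours, and one you might find cleaner to reconstruct.
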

\begin{proof}
See, e.g., \cite[Theorem~21.1]{BC11}.
\end{proof}

Next we recall the definition of the resolvent of an operator, which is an important tool in the theory of monotone operators.

\begin{definition}\label{def:resolvent}
Let $A:\Hi\tto\Hi$ be an operator. The \emph{resolvent} of $A$ is $J_A:=(\Id+A)^{-1}$; i.e.,
\begin{equation*}
J_A(x)=\left\{y\in\Hi : x\in y+A(y)\right\},\quad\text{for all } x\in\Hi.
\end{equation*}
The \emph{reflected resolvent} is defined by $R_A:=2J_A-\Id$.
\end{definition}

Clearly, $\dom J_{A}=\ran(\Id+A)$, and thus Minty's theorem (\Cref{fact:Minty}) guarantees that the resolvent has full domain precisely when $A$ is  maximally monotone. In the following result we collect some additional properties regarding the single-valuedness  and nonexpasiveness of the resolvent and the reflected resolvent of maximally monotone operators.

\begin{fact}\label{fact:nonexpansive}
Let $A:\Hi\tto\Hi$ be a maximally monotone operator. Then,
\begin{enumerate}[label=(\roman*)]
\item $J_A:\Hi\mapsto\Hi$ is firmly nonexpansive, i.e.,
	\begin{equation*}
	\|J_A(x)-J_A(y)\|^2+\|(\Id-J_A)(x)-(\Id-J_A)(y)\|^2\leq\|x-y\|^2, \quad \forall x,y\in \Hi;
	\end{equation*}
\item $R_A:\Hi\mapsto\Hi$ is nonexpansive, i.e.,
	\begin{equation*}
	\|R_A(x)-R_A(y)\|\leq\|x-y\|, \quad \forall x,y\in \Hi.
	\end{equation*}
\end{enumerate}
\end{fact}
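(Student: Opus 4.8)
The plan is to prove both parts by exploiting the firm nonexpansiveness of the resolvent, with part (i) as the substantive claim and part (ii) as an easy algebraic consequence. The key structural fact I would use is that $J_A$ and $\Id - J_A$ are intimately tied to the monotonicity of $A$ through Minty's parametrization: for any $x,y$ in the domain, if we write $a := J_A(x)$ and $b := J_A(y)$, then by \Cref{def:resolvent} we have $x - a \in A(a)$ and $y - b \in A(b)$. Thus the pairs $(a, x-a)$ and $(b, y-b)$ both lie in $\gra A$, and the monotone inequality $\langle a - b, (x-a) - (y-b)\rangle \geq 0$ becomes available. Before invoking any of this, however, I would first establish single-valuedness of $J_A$, since the statement asserts $J_A : \Hi \mapsto \Hi$ is a genuine function: full domain follows from Minty's theorem (\Cref{fact:Minty}) via $\dom J_A = \ran(\Id + A) = \Hi$, and single-valuedness follows because if both $a_1, a_2 \in J_A(x)$ then the monotonicity inequality applied to $(a_1, x - a_1)$ and $(a_2, x - a_2)$ forces $\|a_1 - a_2\|^2 \leq 0$.

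For part (i), I would set $u := J_A(x) - J_A(y)$ and $v := (\Id - J_A)(x) - (\Id - J_A)(y)$, and observe that $u + v = x - y$. The monotonicity computation above shows precisely that $\langle u, v\rangle \geq 0$: indeed, $v = (x - a) - (y - b)$ is exactly the difference of the two operator values attached to $a$ and $b$, so $\langle u, v \rangle = \langle a - b, (x-a)-(y-b)\rangle \geq 0$. The desired firm nonexpansiveness inequality $\|u\|^2 + \|v\|^2 \leq \|x - y\|^2 = \|u + v\|^2$ then follows immediately by expanding $\|u+v\|^2 = \|u\|^2 + 2\langle u, v\rangle + \|v\|^2$ and discarding the nonnegative cross term. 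This is clean and short.

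For part (ii), I would derive nonexpansiveness of $R_A = 2J_A - \Id$ directly from (i). Writing $R_A(x) - R_A(y) = 2u - (x - y) = 2u - (u + v) = u - v$, the claim $\|R_A(x) - R_A(y)\| \leq \|x - y\|$ amounts to $\|u - v\|^2 \leq \|u + v\|^2$. Expanding both sides, this reduces once more to $\langle u, v\rangle \geq 0$, which we already have. So part (ii) costs essentially nothing beyond part (i).

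The main obstacle, if any, is conceptual rather than computational: one must be careful to justify that $J_A$ is single-valued and everywhere-defined before treating it as a function in the inequalities, and to correctly identify the vector $v = (\Id - J_A)(x) - (\Id - J_A)(y)$ with a difference of elements of $\gra A$ so that monotonicity applies. Once the bookkeeping $a = J_A(x)$, $b = J_A(y)$, $u + v = x - y$ is fixed, everything reduces to the single inequality $\langle u, v \rangle \geq 0$, and the remainder is routine expansion of squared norms. I would note that this is the standard argument (see, e.g., \cite[Proposition~23.8 and Corollary~23.10]{BC11}), so an alternative is simply to cite it, but the self-contained Minty-based derivation above is short enough to include.
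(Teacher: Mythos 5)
Your proposal is correct. Every step checks out: full domain of $J_A$ follows from Minty's theorem (\Cref{fact:Minty}), single-valuedness from applying monotonicity to two elements $a_1,a_2\in J_A(x)$ (which yields $-\|a_1-a_2\|^2\geq 0$), and with $u:=J_A(x)-J_A(y)$, $v:=(\Id-J_A)(x)-(\Id-J_A)(y)$ the identity $u+v=x-y$ together with $\langle u,v\rangle\geq 0$ gives both (i) via $\|u+v\|^2=\|u\|^2+2\langle u,v\rangle+\|v\|^2$ and (ii) via $R_A(x)-R_A(y)=u-v$ and $\|u-v\|^2\leq\|u+v\|^2$. The only difference from the paper is that the paper does not prove this statement at all: it is stated as a Fact and the proof is a one-line citation to \cite[Corollary~23.10]{BC11}, consistent with how the paper treats known background results. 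What you have written out is essentially the argument behind that citation (it is the standard proof in Bauschke--Combettes), so your self-contained Minty-based derivation buys independence from the reference at the cost of a few extra lines, while the paper's choice keeps the preliminaries lean. Both are legitimate; your closing remark that one could "simply cite it" is exactly what the authors did.
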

\begin{proof}
See, e.g.,~\cite[Corollary 23.10]{BC11}.
\end{proof}
The resolvents of the maximally monotone operators considered in~\Cref{ex:subdif_normalcone} are also some well-known mappings, as we show next.
\begin{example}[The proximity and the projector operators]\label{ex:prox_proj} The resolvents of the operators considered in~\Cref{ex:subdif_normalcone} are single-valued and firmly nonexpansive with full domain, according to~\Cref{fact:nonexpansive}.
\begin{enumerate}[label=(\roman*)]
\item Let $\partial f:\Hi\tto\Hi$ be the subdifferential of a proper, lower semicontiuous  and convex function $f:\Hi\to{]-\infty,+\infty]}$. Then, $J_{\partial f}=\Prox_f$, where $\Prox_f:\Hi\to\Hi$ is the \emph{proximity operator} of $f$ defined by
\begin{equation*}
\Prox_f(x):=\argmin_{u\in\Hi} \left( f(u)+\frac{1}{2}\|x-u\|^2\right), \quad \text{for all } x\in\Hi;
\end{equation*}
see, e.g.,~\cite[Example~23.3]{BC11}.\label{ex:prox}
\item Let $N_C$ be the normal cone to a nonempty, closed and convex set $C\subseteq\Hi$. Then, $J_{N_C}=P_C$, where $P_C:\Hi\to\Hi$ denotes the \emph{projector} onto $C$ defined by
\begin{equation*}
P_C(x):=\argmin_{c\in C}  \|x-c\|, \quad \text{for all } x\in\Hi;
\end{equation*}
see, e.g.,~\cite[Example~23.4]{BC11}.\label{ex:proj}
\end{enumerate}
\end{example}

We recall next the concept of perturbation of an operator, which was originally introduced and discussed in~\cite{BHM14}. We follow the notation used in~\cite{BM17}.

\begin{definition}
Let $A:\Hi\tto\Hi$ and let $w\in\Hi$. The corresponding \emph{inner $w$-perturbation} of $A$ is the operator $A_w:\Hi\tto\Hi$ defined by
\begin{equation*}
A_w(x):=A(x-w),\quad \text{for all } x\in\Hi.
\end{equation*}
\end{definition}

\begin{lemma}\label{fact:translation}
Let $A:\Hi\tto\Hi$ and let $w\in\Hi$. Then,
$$J_{(A_w)}=(J_{A})_w+w.$$
\end{lemma}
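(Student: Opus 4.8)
The plan is to prove the set equality $J_{(A_w)}(x) = (J_A)_w(x) + w$ directly from the definition $J_A=(\Id+A)^{-1}$, without invoking any monotonicity of $A$. Since no such hypothesis is assumed in the statement, $J_{(A_w)}$ and $J_A$ must be treated as genuine set-valued mappings (possibly with nonfull domain), so the argument should establish both inclusions at once through a chain of equivalences valid for every $x\in\Hi$, rather than by manipulating single-valued functions.

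First I would fix $x\in\Hi$ and unwind the left-hand side: by \Cref{def:resolvent} together with the definition of the inner perturbation $A_w(y)=A(y-w)$,
\[
y\in J_{(A_w)}(x) \quad\Longleftrightarrow\quad x\in y+A_w(y)=y+A(y-w).
\]
The key step is the change of variable $z:=y-w$. Subtracting $w$ from both sides of the membership $x\in y+A(y-w)$ yields $x-w\in (y-w)+A(y-w)=z+A(z)$, which is exactly $z\in J_A(x-w)$. Reading the equivalences backward then gives
\[
y\in J_{(A_w)}(x)\iff y-w\in J_A(x-w)\iff y\in J_A(x-w)+w,
\]
and since $J_A(x-w)+w=(J_A)_w(x)+w$ by the definition of the inner $w$-perturbation applied to the operator $J_A$, the two sets coincide, as claimed.

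I do not anticipate a genuine obstacle: the computation is purely formal and relies on nothing beyond the definitions of the resolvent and of the inner perturbation, together with the translation invariance of the graph-membership condition. The single point that demands care is that, lacking maximal monotonicity, each ``$=$'' above is an equality of subsets of $\Hi$ rather than of points; one must therefore phrase the whole derivation as equivalences between graph-membership statements and avoid treating $J_A$ as a function. Once this is respected, the shift $z=y-w$ delivers the identity in one line, and the result holds for arbitrary $A$, in particular recovering the single-valued case when $A$ is maximally monotone.
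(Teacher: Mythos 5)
Your proof is correct and follows essentially the same route as the paper's: the identical chain of graph-membership equivalences $y\in J_{(A_w)}(x)\Leftrightarrow x\in y+A(y-w)\Leftrightarrow y-w\in J_A(x-w)$, obtained by subtracting $w$ throughout. The only difference is cosmetic: you make explicit the final identification $J_A(x-w)+w=(J_A)_w(x)+w$ and the set-valued reading of each step, which the paper leaves implicit.
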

\begin{proof}
Observe that, for any $x\in\Hi$,
\begin{align*}
p\in J_{(A_w)}(x) & \Leftrightarrow x\in p+A(p-w)\\ & \Leftrightarrow x-w \in p-w+A(p-w) \Leftrightarrow p-w\in J_A(x-w),
\end{align*}
which proves the result.
\end{proof}

Next we collect some of the main convergence properties of a powerful algorithm for finding a zero of the sum of two maximally monotone operators, only involving individual evaluations of their resolvents. It is commonly called the \emph{Douglas--Rachford algorithm}, since it was originally proposed by J.~Douglas and H.H.~Rachford in~\cite{DR56} for solving a system of linear equations arising in heat conduction problems. However, Lions and Mercier~\cite{LM79} were the ones who successfully extended the algorithm to make it able to find a zero of the sum of two maximally monotone operators. We recommend~\cite[Appendix]{BLM17} to the reader interested in the connection between the original algorithm and the extension of Lions and Mercier.

\begin{fact}[Douglas--Rachford splitting algorithm]\label{fact:DR}
	Let $A,B:\Hi\tto \Hi$ be maximally monotone operators such that $\zer(A+B)\neq \emptyset$, let $\gamma>0$ and let ${(\lambda_n)}_{n=0}^\infty$ be a sequence in $[0,1]$ such that $\sum_{n\geq 0} \lambda_n(1-\lambda_n)=+\infty$. Given any $x_0\in\Hi$, set
	$$
	x_{n+1}=(1-\lambda_n)x_n+\lambda_nR_{\gamma B}R_{\gamma A}(x_n),\quad\text{for }n=0,1,2,\ldots.
	$$
	Then, there exists $x^\star\in\Fix\left( R_{\gamma B}R_{\gamma A}\right)$ such that  following assertions hold:
	\begin{enumerate}[label=(\roman*)]
		\item $({x_{n+1}-x_n)}_{n=0}^\infty$ converges strongly to $0$.
		\item ${(x_n)}_{n=0}^\infty$ converges weakly to $x^\star$, and $J_{\gamma A}(x^\star)\in\zer(A+B)$.
		\item $\left({J_{\gamma A}(x_n)}\right)_{n=0}^\infty$ converges weakly to $J_{\gamma A}(x^\star)$.
		\item Suppose that $A$ or $B$ is $\mu$-strongly monotone for some constant $\mu>0$. Then, the sequence $\left({J_{\gamma A}(x_n)}\right)_{n=0}^\infty$ converges strongly to the unique point in $\zer(A+B)$.
	\end{enumerate}
\end{fact}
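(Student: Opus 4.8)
The plan is to recognize the iteration as a Krasnoselskii--Mann scheme for the nonexpansive operator $T:=R_{\gamma B}R_{\gamma A}$ and to establish the four assertions in increasing order of difficulty. First I would note that $\gamma A$ and $\gamma B$ are maximally monotone by \cref{fact:prop_monotone}, so that by \cref{fact:nonexpansive} the reflected resolvents $R_{\gamma A},R_{\gamma B}$ are nonexpansive and hence so is their composition $T$. A short computation with the resolvent identities characterizes the fixed points: writing $a:=J_{\gamma A}(x)$, one checks that $x\in\Fix T$ is equivalent to $J_{\gamma B}(2a-x)=a$, which unwinds to $\gamma^{-1}(x-a)\in A(a)$ and $\gamma^{-1}(a-x)\in B(a)$, i.e.\ $0\in A(a)+B(a)$. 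Thus $J_{\gamma A}(\Fix T)=\zer(A+B)$ and, in particular, the hypothesis $\zer(A+B)\neq\emptyset$ guarantees $\Fix T\neq\emptyset$.

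For (i) I would fix $z\in\Fix T$ and use the identity $\|(1-\lambda)u+\lambda v\|^2=(1-\lambda)\|u\|^2+\lambda\|v\|^2-\lambda(1-\lambda)\|u-v\|^2$ together with nonexpansiveness of $T$ to obtain
\[
\|x_{n+1}-z\|^2\le\|x_n-z\|^2-\lambda_n(1-\lambda_n)\|Tx_n-x_n\|^2 .
\]
This makes $(\|x_n-z\|)$ nonincreasing, so $(x_n)$ is bounded and Fej\'er monotone with respect to $\Fix T$, and upon summing it yields $\sum_n\lambda_n(1-\lambda_n)\|Tx_n-x_n\|^2<\infty$. A separate elementary estimate shows $\|Tx_{n+1}-x_{n+1}\|\le\|Tx_n-x_n\|$, so $(\|Tx_n-x_n\|)$ is nonincreasing; combined with $\sum_n\lambda_n(1-\lambda_n)=+\infty$ this forces $\|Tx_n-x_n\|\to0$, whence $x_{n+1}-x_n=\lambda_n(Tx_n-x_n)\to0$. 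Setting $a_n:=J_{\gamma A}(x_n)$ and $b_n:=J_{\gamma B}(R_{\gamma A}x_n)$, note $Tx_n-x_n=2(b_n-a_n)$, so I also record the strong limits $b_n-a_n\to0$ and, with $u_n:=\gamma^{-1}(x_n-a_n)\in A(a_n)$ and $v_n:=\gamma^{-1}(R_{\gamma A}x_n-b_n)\in B(b_n)$, that $u_n+v_n=\gamma^{-1}(a_n-b_n)\to0$.

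Assertion (ii) then follows from the standard machinery for Fej\'er monotone sequences: boundedness provides a weak cluster point $x^\star$, the demiclosedness at $0$ of $\Id-T$ together with $Tx_n-x_n\to0$ gives $x^\star\in\Fix T$, and a Fej\'er monotone sequence all of whose weak cluster points lie in the target set converges weakly, so $x_n\rightharpoonup x^\star$; the fixed-point characterization gives $J_{\gamma A}(x^\star)\in\zer(A+B)$ (see, e.g., \cite{BC11}). For (iii) I would argue that every weak cluster point of the bounded shadow sequence $(a_n)$ equals $a^\star:=J_{\gamma A}(x^\star)$. Along a subsequence with $a_{n_k}\rightharpoonup\bar a$ one has $b_{n_k}\rightharpoonup\bar a$ and $u_{n_k}\rightharpoonup\bar u:=\gamma^{-1}(x^\star-\bar a)$, $v_{n_k}\rightharpoonup-\bar u$; if one can show $(\bar a,\bar u)\in\gra A$, then $x^\star-\bar a\in\gamma A(\bar a)$ forces $\bar a=J_{\gamma A}(x^\star)=a^\star$ by single-valuedness, and since every weak cluster point coincides with $a^\star$ the sequence $(a_n)$ converges weakly to $a^\star$. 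The main obstacle is precisely this weak--weak passage to the limit in the graph of $A$: since $J_{\gamma A}$ is only nonexpansive it is not weakly sequentially continuous, and the graph of a maximally monotone operator is not weakly--weakly closed. The standard remedy is to verify the sufficient condition $\limsup_k\langle a_{n_k},u_{n_k}\rangle\le\langle\bar a,\bar u\rangle$, for which one exploits the companion inclusions in $B$ through the identity $\langle a_n,u_n\rangle+\langle b_n,v_n\rangle=\gamma^{-1}\bigl(\langle a_n-b_n,x_n\rangle-\|a_n-b_n\|^2\bigr)\to0$ and the monotonicity of both operators; this is the delicate argument of Svaiter, which I would invoke (cf.~\cite{Svaiter}).

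Finally, for (iv) suppose $A$ is $\mu$-strongly monotone (the case of $B$ being symmetric). Then $A+B$ is strongly monotone, hence $\zer(A+B)$ is a singleton $\{z\}$; let $w\in A(z)$ with $-w\in B(z)$. Adding the inequalities $\langle a_n-z,u_n-w\rangle\ge\mu\|a_n-z\|^2$ and $\langle b_n-z,v_n+w\rangle\ge0$ gives
\[
\mu\|a_n-z\|^2\le\langle a_n,u_n\rangle+\langle b_n,v_n\rangle-\langle a_n-b_n,w\rangle-\langle z,u_n+v_n\rangle .
\]
Every term on the right tends to $0$: the first bracket by the identity recorded above, and the remaining two because $a_n-b_n\to0$ and $u_n+v_n\to0$ (all sequences involved being bounded thanks to the Fej\'er monotonicity of $(x_n)$). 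Hence $\|a_n-z\|\to0$, i.e.\ the shadow sequence converges strongly to the unique element of $\zer(A+B)$. The genuinely hard step of the whole argument is the weak--weak limit passage underlying (iii); the remaining parts are standard once the Krasnoselskii--Mann and Fej\'er-monotonicity estimates are in place.
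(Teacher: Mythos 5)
Your proposal cannot be compared line-by-line with the paper's own proof, because the paper does not prove this statement: it is stated as a \emph{Fact}, i.e.\ a result imported from the literature, and its entire proof is the citation ``See, e.g., \cite[Theorem~25.6]{BC11}.'' What you have written is essentially a reconstruction of the proof behind that citation, and it is sound. The Krasnoselskii--Mann/Fej\'er estimates for the nonexpansive map $T=R_{\gamma B}R_{\gamma A}$ give (i) and weak convergence in (ii); your fixed-point computation $J_{\gamma A}\left(\Fix T\right)=\zer(A+B)$ is correct, as are the identities $Tx_n-x_n=2(b_n-a_n)$ and $\langle a_n,u_n\rangle+\langle b_n,v_n\rangle=\gamma^{-1}\left(\langle a_n-b_n,x_n\rangle-\|a_n-b_n\|^2\right)\to 0$, and the strong-monotonicity estimate in (iv) does yield $\mu\|a_n-z\|^2\to 0$. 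The one step you do not carry out---the weak-to-weak passage into $\gra A$ needed for (iii)---is exactly the step you flag, and deferring it to \cite{Svaiter} is legitimate (it is also how the feasibility-setting statement is handled in the paper's introduction); moreover, the ingredients you list already suffice to finish it: monotonicity of $A$ and $B$ applied against arbitrary $(y_A,w_A)\in\gra A$, $(y_B,w_B)\in\gra B$, combined with $\langle a_k,u_k\rangle+\langle b_k,v_k\rangle\to 0$, gives in the limit $\langle \bar a-y_A,w_A-\bar u\rangle+\langle \bar a-y_B,w_B+\bar u\rangle\le 0$, and if $(\bar a,\bar u)\notin\gra A$ then maximality of $A$ makes the first term strictly positive for some choice while maximality of $B$ (or membership of $(\bar a,-\bar u)$ in $\gra B$) makes the second nonnegative for some choice, a contradiction; hence $(\bar a,\bar u)\in\gra A$ and $\bar a=J_{\gamma A}(x^\star)$. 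In short: the paper's ``approach'' is to quote the result, yours is to prove it, and your proof is the standard one underlying the quoted reference.
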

\begin{proof}
See, e.g.,~\cite[Theorem~25.6]{BC11}.
\end{proof}

\section{The averaged alternating modified reflections method}\label{sec:AAMR}

We begin this section with the definition of a \emph{modified reflected resolvent}, which is the natural extension of the modified reflector introduced in~\cite[Definition~3.1]{AAMR}.

\begin{definition}
	Let $A:\Hi\tto\Hi$ be an operator. Given any $\beta\in{]0,1]}$, the operator $2\beta J_A-\Id$ is called a \emph{modified reflected resolvent} of $A$.
\end{definition}

The case $\beta=1$ coincides with the classical reflected resolvent $R_A$. In fact, for any $\beta\in{]0,1]}$, the modified reflected resolvent $2\beta J_A-\Id$ is a convex combination of $R_A$ and $-\Id$. Indeed, one has
$$(2\beta J_A-\Id)(x)=(1-\beta)(-x)+\beta R_A(x),\quad \text{for all } x\in\Hi.$$

In this work, our analysis is mainly based on the connection of the modified reflected resolvent with the classical reflected resolvent of a different operator, which is defined next.

\begin{definition}
Given an operator $A:\Hi\tto\Hi$ and given any $\beta\in{]0,1[}$, we define the \emph{$\beta$-strengthening} of $A$ as the operator $A^{(\beta)}:\Hi\tto \Hi$ defined by
\begin{equation*}
A^{(\beta)}(x):=\left( A+(1-\beta)\Id \right)\left(\frac{x}{\beta}\right),\quad \text{for all } x\in\Hi.
\end{equation*}
\end{definition}

\begin{proposition}\label{prop:strong_op}
Let $A:\Hi\tto \Hi$ be an operator and let $\beta\in{]0,1[}$. Then,
$$J_{A^{(\beta)}}=\beta J_{A}.$$
Further, $A$ is monotone if and only if ${A}^{(\beta)}$ is $\frac{1-\beta}{\beta}$-strongly monotone, and $A$ is maximally monotone if and only if $A^{(\beta)}$ is so.
\end{proposition}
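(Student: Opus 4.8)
The plan is to establish the three claims in turn, extracting essentially everything from a single simplification of $\Id+A^{(\beta)}$. For the resolvent identity, I would unfold the definition of the $\beta$-strengthening and simplify: for every $x\in\Hi$,
$(\Id+A^{(\beta)})(x)=x+A\!\left(\tfrac{x}{\beta}\right)+(1-\beta)\tfrac{x}{\beta}=\tfrac{x}{\beta}+A\!\left(\tfrac{x}{\beta}\right)=(\Id+A)\!\left(\tfrac{x}{\beta}\right)$,
using $x+(1-\beta)\tfrac{x}{\beta}=\tfrac{x}{\beta}$. Inverting this relation gives, for each $x$, the chain $p\in J_{A^{(\beta)}}(x)\Leftrightarrow x\in(\Id+A)(\tfrac{p}{\beta})\Leftrightarrow \tfrac{p}{\beta}\in J_A(x)\Leftrightarrow p\in\beta J_A(x)$, which is exactly $J_{A^{(\beta)}}=\beta J_A$.

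For the monotonicity dichotomy, I would record the by-product of the computation above, namely $A^{(\beta)}(x)=A(\tfrac{x}{\beta})+\tfrac{1-\beta}{\beta}x$, so that $A^{(\beta)}-\tfrac{1-\beta}{\beta}\Id$ is precisely the operator $x\mapsto A(\tfrac{x}{\beta})$. By \Cref{fact:prop_monotone} (with $w=z=0$, $\gamma=1$, $\lambda=\tfrac{1}{\beta}$, so that $\gamma\lambda>0$) this operator is monotone if and only if $A$ is. Since, by definition, $A^{(\beta)}$ is $\tfrac{1-\beta}{\beta}$-strongly monotone precisely when $A^{(\beta)}-\tfrac{1-\beta}{\beta}\Id$ is monotone, the desired equivalence follows at once.

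For maximal monotonicity, I would combine Minty's theorem (\Cref{fact:Minty}) with the identity of the first step. Since $x\mapsto\tfrac{x}{\beta}$ is a bijection of $\Hi$, the relation $\Id+A^{(\beta)}=(\Id+A)(\tfrac{\cdot}{\beta})$ yields $\ran(\Id+A^{(\beta)})=\ran(\Id+A)$, and hence $\ran(\Id+A)=\Hi\Leftrightarrow\ran(\Id+A^{(\beta)})=\Hi$. Feeding this range identity together with the correspondence of the previous step into Minty's criterion gives the chain: $A$ maximally monotone $\Leftrightarrow$ [$A$ monotone and $\ran(\Id+A)=\Hi$] $\Leftrightarrow$ [$A^{(\beta)}$ is $\tfrac{1-\beta}{\beta}$-strongly monotone and $\ran(\Id+A^{(\beta)})=\Hi$] $\Leftrightarrow$ $A^{(\beta)}$ maximally monotone, where the last equivalence uses that strong monotonicity entails monotonicity.

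I expect the delicate point to be this last step rather than the computations. Minty's theorem takes monotonicity as a standing hypothesis, so the argument must route the monotonicity information through the strong-monotonicity correspondence of the second step, rather than through the range identity alone, in order to apply Minty in both directions. The resolvent simplification is the engine driving all three parts; the care lies in matching these monotonicity hypotheses when invoking Minty.
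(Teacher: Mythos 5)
Your proposal follows the paper's proof essentially step for step: the same simplification $\Id+A^{(\beta)}=(\Id+A)\circ\bigl(\tfrac{1}{\beta}\Id\bigr)$, the same inversion argument yielding $J_{A^{(\beta)}}=\beta J_A$, the same appeal to \Cref{fact:prop_monotone} (with $\lambda=\tfrac{1}{\beta}$) for the strong-monotonicity equivalence, and the same Minty-plus-range-equality argument for maximality. The resolvent identity and the strong-monotonicity equivalence are proved correctly and exactly as in the paper.

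The delicate point you single out, however, is not actually resolved by your chain, and it is worth being precise about why. The backward implication of your final equivalence requires ``$A^{(\beta)}$ maximally monotone $\Rightarrow$ $A^{(\beta)}$ is $\tfrac{1-\beta}{\beta}$-strongly monotone''. But maximal monotonicity of $A^{(\beta)}$ only yields its monotonicity together with $\ran(\Id+A^{(\beta)})=\Hi$, and mere monotonicity of $A^{(\beta)}$ does not give back monotonicity of $A$: it is equivalent to monotonicity of $A+(1-\beta)\Id$, which is weaker. Concretely, for $A=-(1-\beta)\Id$ one gets $A^{(\beta)}=0$, which is maximally monotone, while $A$ is not monotone at all; so the literal implication ``$A^{(\beta)}$ maximally monotone $\Rightarrow A$ maximally monotone'' is false, and no routing of Minty can prove it. This is not a defect of your write-up relative to the paper: the paper's own proof of this part is a one-line appeal to \Cref{fact:Minty} and the range equality and has exactly the same gap. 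Both your argument and the paper's become correct under the reading in which ``$A^{(\beta)}$ is so'' means ``$A^{(\beta)}$ is maximally monotone \emph{and} $\tfrac{1-\beta}{\beta}$-strongly monotone'' --- which is precisely how the proposition is used in \Cref{th:AAMR} --- since then the strong monotonicity hypothesis restores monotonicity of $A$ via your second step, and Minty applied to the monotone operator $A^{(\beta)}$ supplies the full range.
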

\begin{proof}
The $\beta$-strengthening of $A$ can be expressed as
\begin{equation}\label{eq:Abeta_decom}
A^{(\beta)}=A\circ\left(\frac{1}{\beta}\Id\right)+\frac{1-\beta}{\beta}\Id,
\end{equation}
and thus,
\begin{equation}\label{eq:Abeta_decom2}
\Id+A^{(\beta)}=(\Id+A)\circ \left(\frac{1}{\beta} \Id\right).
\end{equation}
We directly deduce from~\eqref{eq:Abeta_decom2} that
\begin{equation}\label{eq:ran_equality}
\ran\left(\Id+A^{(\beta)}\right)=\ran\left( (\Id+A)\circ \left(\frac{1}{\beta} \Id\right) \right)=\ran\left(\Id+A\right).
\end{equation}
Now, for any $x\in\dom J_{A^{(\beta)}}=\dom J_{A}$ and $p\in\Hi$, we get from~\eqref{eq:Abeta_decom2} that
\begin{align*}
p\in J_{A^{(\beta)}}(x)& \Leftrightarrow x\in \left(\Id+A^{(\beta)}\right)(p) \Leftrightarrow  x\in \left(\Id+A\right)\left(\frac{p}{\beta}\right) \\
&\Leftrightarrow \frac{p}{\beta}\in J_A(x) \Leftrightarrow p\in\beta J_A(x),
\end{align*}
which proves that $J_{A^{(\beta)}}=\beta J_A$, as claimed.

By~\Cref{fact:prop_monotone}, $A$ is monotone if and only if $A \circ\left(\frac{1}{\beta}\Id\right)$ is monotone, so the assertion about the $\frac{1-\beta}{\beta}$-strong monotonicity of $A^{(\beta)}$ directly follows from~\eqref{eq:Abeta_decom}. Finally, $A$ is maximally monotone if and only if $A^{(\beta)}$ is so, according to~\Cref{fact:Minty} and~\eqref{eq:ran_equality}.
\end{proof}

\Cref{prop:strong_op} establishes strong monotonicity of $A^{(\beta)}$ when $A$ is monotone. The set of zeros of a strongly monotone operator is known to be at most a singleton (see, e.g.,~\cite[Corollary~23.35]{BC11}). Hence, the sum of the $\beta$-strengthenings of two monotone operators will have at most one zero. In the next proposition, we characterize this set for any pair of general operators.

\begin{proposition}\label{prop:zeros}
Let $A,B:\Hi\tto\Hi$ be two operators and let $\beta\in{]0,1[}$. Then, the set of zeros of the sum of their $\beta$-strengthenings $A^{(\beta)}$ and $B^{(\beta)}$ is given by
\begin{equation*}\label{eq:zeros}
\zer\left(A^{(\beta)}+B^{(\beta)}\right)=\beta J_{\frac{1}{2(1-\beta)}(A+B)} (0).
\end{equation*}
Consequently, $$\zer\left(A^{(\beta)}+B^{(\beta)}\right)\neq\emptyset\Leftrightarrow 0\in\ran\left(\Id+\frac{1}{2(1-\beta)}(A+B)\right).$$
\end{proposition}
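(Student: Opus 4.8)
The plan is to reduce the zero-inclusion for $A^{(\beta)}+B^{(\beta)}$ to a resolvent inclusion at the point $0$ by means of the decomposition~\eqref{eq:Abeta_decom}, followed by a rescaling of the variable. First I would use~\eqref{eq:Abeta_decom} to write
$$\left(A^{(\beta)}+B^{(\beta)}\right)(x)=(A+B)\left(\frac{x}{\beta}\right)+\frac{2(1-\beta)}{\beta}x,\quad\text{for all }x\in\Hi,$$
so that $x\in\zer\left(A^{(\beta)}+B^{(\beta)}\right)$ means precisely $0\in(A+B)\left(\frac{x}{\beta}\right)+\frac{2(1-\beta)}{\beta}x$.

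Next I would substitute $y:=\frac{x}{\beta}$ (equivalently $x=\beta y$), which is a bijection of $\Hi$ since $\beta\neq 0$. Under this substitution the inclusion becomes $0\in(A+B)(y)+2(1-\beta)y$. Dividing by the positive constant $2(1-\beta)$ --- here $\beta\in{]0,1[}$ is essential, so that $1-\beta>0$ --- this is equivalent to $0\in y+\frac{1}{2(1-\beta)}(A+B)(y)$, that is, to $y\in J_{\frac{1}{2(1-\beta)}(A+B)}(0)$ by the definition of the resolvent evaluated at $0$. Tracing the substitution back, $x\in\zer\left(A^{(\beta)}+B^{(\beta)}\right)$ if and only if $x\in\beta J_{\frac{1}{2(1-\beta)}(A+B)}(0)$, which is the claimed identity.

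For the consequence, since $\beta>0$ the set $\beta J_{\frac{1}{2(1-\beta)}(A+B)}(0)$ is nonempty if and only if $J_{\frac{1}{2(1-\beta)}(A+B)}(0)$ is, i.e., if and only if $0\in\dom J_{\frac{1}{2(1-\beta)}(A+B)}$. Recalling that $\dom J_C=\ran(\Id+C)$ for any operator $C$, this is exactly $0\in\ran\left(\Id+\frac{1}{2(1-\beta)}(A+B)\right)$, as stated.

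The computation is essentially a bookkeeping exercise and I do not anticipate a genuine obstacle; the only point requiring care is tracking the scalar factors through the two reparametrizations (the $\frac{1}{\beta}$ inside the argument and the $2(1-\beta)$ multiplying the identity), together with remembering that $\beta\in{]0,1[}$ guarantees both $\beta\neq 0$ and $1-\beta>0$, which is what makes the rescalings reversible and the displayed chain of equivalences legitimate.
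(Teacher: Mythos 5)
Your proposal is correct and follows essentially the same route as the paper: expand the $\beta$-strengthenings via~\eqref{eq:Abeta_decom}, rescale the variable by $\beta$, divide by $2(1-\beta)$, and recognize the resolvent inclusion at $0$; your explicit substitution $y=x/\beta$ and the explicit appeal to $\dom J_C=\ran(\Id+C)$ for the ``consequently'' part are only cosmetic differences from the paper's chain of equivalences.
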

\begin{proof}
For any $x\in\Hi$, one can easily check that
\begin{align*}
x\in\zer\left(A^{(\beta)}+B^{(\beta)}\right) & \Leftrightarrow 0\in A^{(\beta)}(x)+B^{(\beta)}(x)\\
& \Leftrightarrow 0\in A\left(\frac{x}{\beta}\right)+ B\left(\frac{x}{\beta}\right)+2(1-\beta)\frac{x}{\beta}\\
& \Leftrightarrow 0\in \frac{x}{\beta} + \frac{1}{2(1-\beta)}(A+B)\left(\frac{x}{\beta}\right)\\
& \Leftrightarrow \frac{x}{\beta}\in J_{\frac{1}{2(1-\beta)}(A+B)} (0) \Leftrightarrow x\in \beta J_{\frac{1}{2(1-\beta)}(A+B)} (0),
\end{align*}
which proves the result.
\end{proof}

We are ready to prove our main result, which shows that the AAMR method can be applied to compute the resolvent of the sum of two maximally monotone operators.

\begin{theorem}[AAMR splitting algorithm]\label{th:AAMR}
Let $A,B:\Hi\tto \Hi$ be two maximally monotone operators, let $\gamma>0$ and let ${(\lambda_n)}_{n=0}^\infty$ be a sequence in $[0,1]$ such that $\sum_{n\geq 0} \lambda_n(1-\lambda_n)=+\infty$. Let $\beta\in{]0,1[}$ and suppose that ${q\in\ran\left(\Id+\frac{\gamma}{2(1-\beta)}(A+B)\right)}$. Given any $x_0\in\Hi$, for every $n=0,1,2,\ldots$, set
\begin{equation}\label{eq:AAMR}
x_{n+1}=(1-\lambda_n)x_n+\lambda_n(2\beta J_{(\gamma B_{-q})}-\Id)(2\beta J_{(\gamma A_{-q})}-\Id)(x_n).
\end{equation}
Then, there exists $x^\star\in\Fix\left( (2\beta J_{(\gamma B_{-q})}-\Id)(2\beta J_{(\gamma A_{-q})}-\Id)\right)$ such that the following hold:
	\begin{enumerate}[label=(\roman*)]
		\item $({x_{n+1}-x_n)}_{n=0}^\infty$ converges strongly to $0$;\label{th:AAMR_i}
		\item ${(x_n)}_{n=0}^\infty$ converges weakly to $x^\star$, and $J_{\gamma A}(q+x^\star)=J_{\frac{\gamma}{2(1-\beta)}(A+B)}(q)$;\label{th:AAMR_ii}
		\item $\left({J_{\gamma A}(q+x_n)}\right)_{n=0}^\infty$ converges strongly to $J_{\frac{\gamma}{2(1-\beta)}(A+B)}(q)$.\label{th:AAMR_iii}
	\end{enumerate}
\end{theorem}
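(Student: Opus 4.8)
The plan is to recognize the AAMR iteration \eqref{eq:AAMR} as a disguised Douglas--Rachford iteration and then invoke \Cref{fact:DR}. The bridge is \Cref{prop:strong_op}: applying it to the operators $\gamma A_{-q}$ and $\gamma B_{-q}$ (which are maximally monotone, the maximal monotonicity of $A$ and $B$ being preserved through \Cref{fact:prop_monotone}), their $\beta$-strengthenings $\widetilde A:=(\gamma A_{-q})^{(\beta)}$ and $\widetilde B:=(\gamma B_{-q})^{(\beta)}$ satisfy $J_{\widetilde A}=\beta J_{\gamma A_{-q}}$ and $J_{\widetilde B}=\beta J_{\gamma B_{-q}}$. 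Hence the modified reflected resolvents in \eqref{eq:AAMR} are precisely the classical reflected resolvents of the strengthenings,
$$2\beta J_{(\gamma A_{-q})}-\Id=R_{\widetilde A},\qquad 2\beta J_{(\gamma B_{-q})}-\Id=R_{\widetilde B},$$
so that \eqref{eq:AAMR} reads $x_{n+1}=(1-\lambda_n)x_n+\lambda_n R_{\widetilde B}R_{\widetilde A}(x_n)$, which is exactly the Douglas--Rachford iteration of \Cref{fact:DR} (with stepsize $\gamma=1$) for the pair $\widetilde A,\widetilde B$.

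To apply \Cref{fact:DR} I first secure its hypotheses. By \Cref{prop:strong_op} both $\widetilde A$ and $\widetilde B$ are maximally monotone and, moreover, $\frac{1-\beta}{\beta}$-strongly monotone, a fact I will use later to upgrade weak to strong convergence of the shadow. The remaining hypothesis $\zer(\widetilde A+\widetilde B)\neq\emptyset$ is the step I expect to require the most care, since it is where the assumption on $q$ must be matched. Using that $\gamma A_{-q}+\gamma B_{-q}=\gamma(A+B)_{-q}$, \Cref{prop:zeros} gives $\zer(\widetilde A+\widetilde B)=\beta J_{\frac{\gamma}{2(1-\beta)}(A+B)_{-q}}(0)$, nonempty exactly when $0\in\ran\bigl(\Id+\frac{\gamma}{2(1-\beta)}(A+B)_{-q}\bigr)$. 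Unwinding the inner perturbation via the substitution $y=x+q$ shows this is equivalent to $q\in\ran\bigl(\Id+\frac{\gamma}{2(1-\beta)}(A+B)\bigr)$, which is the standing hypothesis. This identification is the crux of the argument: it is what makes the resolvent $J_{\frac{\gamma}{2(1-\beta)}(A+B)}(q)$ of the conclusions emerge naturally from \Cref{prop:zeros}.

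With the hypotheses in place, \Cref{fact:DR} produces a fixed point $x^\star$ of $R_{\widetilde B}R_{\widetilde A}$ with $x_{n+1}-x_n\to 0$, $x_n\rightharpoonup x^\star$, and---since $\widetilde A$ is strongly monotone---$J_{\widetilde A}(x_n)$ converging strongly to the unique element of $\zer(\widetilde A+\widetilde B)$. It remains to translate these back. As $\Fix(R_{\widetilde B}R_{\widetilde A})$ is the fixed-point set of the theorem, assertion \ref{th:AAMR_i} and the weak convergence in \ref{th:AAMR_ii} are immediate. For the shadow limits I combine $J_{\widetilde A}=\beta J_{\gamma A_{-q}}$ with \Cref{fact:translation}, which yields $J_{\gamma A_{-q}}(x)=J_{\gamma A}(x+q)-q$ and, likewise, $J_{\frac{\gamma}{2(1-\beta)}(A+B)_{-q}}(0)=J_{\frac{\gamma}{2(1-\beta)}(A+B)}(q)-q$. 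Substituting into $J_{\widetilde A}(x_n)\to\beta J_{\frac{\gamma}{2(1-\beta)}(A+B)_{-q}}(0)$ and cancelling the common factor $\beta$ and shift $q$ converts the strong convergence of $J_{\widetilde A}(x_n)$ into the strong convergence of $J_{\gamma A}(q+x_n)$ to $J_{\frac{\gamma}{2(1-\beta)}(A+B)}(q)$, giving \ref{th:AAMR_iii}; evaluating the same identity at $x^\star$ produces the equality in \ref{th:AAMR_ii}.
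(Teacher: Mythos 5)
Your proposal is correct and follows essentially the same route as the paper's own proof: rewriting \eqref{eq:AAMR} as a Douglas--Rachford iteration for the $\beta$-strengthenings $(\gamma A_{-q})^{(\beta)}$ and $(\gamma B_{-q})^{(\beta)}$ via \Cref{prop:strong_op}, verifying $\zer$ nonemptiness through \Cref{prop:zeros} and the translation of the range condition, invoking \Cref{fact:DR} (with strong monotonicity giving the strong shadow convergence), and undoing the perturbation with \Cref{fact:translation}. The only cosmetic difference is that you phrase the range equivalence through the substitution $y=x+q$ and the identity $\gamma A_{-q}+\gamma B_{-q}=\gamma(A+B)_{-q}$, whereas the paper carries out the same computation explicitly.
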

\begin{proof}
Since $A$ and $B$ are maximally monotone,  by~\Cref{fact:prop_monotone}, the operators $\gamma A_{-q}$ and $\gamma B_{-q}$ are also maximally monotone. Thus, in view of~\Cref{prop:strong_op}, the iterative scheme in~\eqref{eq:AAMR} becomes
$$
x_{n+1}=(1-\lambda_n)x_n+\lambda_nR_{(\gamma B_{-q})^{(\beta)}}R_{(\gamma A_{-q})^{(\beta)}}(x_n),\quad n=0,1,2,\ldots,
$$
with $(\gamma A_{-q})^{(\beta)}$ and $(\gamma B_{-q})^{(\beta)}$ maximally monotone and $\frac{\beta}{1-\beta}$-strongly monotone. Now observe that $q\in\ran\left(\Id+\frac{\gamma}{2(1-\beta)}(A+B)\right)$ if and only if there exists $z\in\Hi$ such that
\begin{align*}
q\in z+\frac{\gamma}{2(1-\beta)}\left( A+B\right)(z) & \Leftrightarrow 0\in z-q+\frac{1}{2(1-\beta)}\gamma\left( A_{-q}+B_{-q}\right)(z-q)\\
& \Leftrightarrow 0\in\ran\left(\Id+\frac{1}{2(1-\beta)}(\gamma A_{-q}+\gamma B_{-q})\right).
\end{align*}
Hence,~\Cref{prop:zeros} implies
\begin{equation}\label{eq:AAMR_zeros}
\zer\left((\gamma A_{-q})^{(\beta)}+(\gamma B_{-q})^{(\beta)}\right)=\left\{\beta J_{\frac{\gamma}{2(1-\beta)}(A_{-q}+B_{-q})} (0) \right\}\neq\emptyset.
\end{equation}
We are then in position to apply~\cref{fact:DR}, which yields the existence of
$$x^\star\in \Fix\left( R_{(\gamma B_{-q})^{(\beta)}}R_{(\gamma A_{-q})^{(\beta)}} \right)=\Fix\left( (2\beta J_{(\gamma B_{-q})}-\Id)(2\beta J_{(\gamma A_{-q})}-\Id)\right)$$
such that $({x_{n+1}-x_n)}_{n=0}^\infty\to 0$, ${(x_n)}_{n=0}^\infty\rightharpoonup x^\star$ and
\begin{equation}\label{eq:AAMR_fixpoint}
J_{(\gamma A_{-q})^{(\beta)}}(x^\star)\in \zer\left((\gamma A_{-q})^{(\beta)}+(\gamma B_{-q})^{(\beta)}\right).
\end{equation}
According to~\Cref{prop:strong_op}, together with~\Cref{fact:translation}, we have that
\begin{equation}\label{eq:AAMR_untranslate}
J_{(\gamma A_{-q})^{(\beta)}}(x)=\beta J_{(\gamma A_{-q})}(x)=\beta  \left(J_{\gamma A}(x+q)-q\right), \quad \text{for all } x\in\Hi;
\end{equation}
and also by~\Cref{fact:translation},
\begin{equation}\label{eq:AAMR_untranslate_solution}
J_{\frac{\gamma}{2(1-\beta)}(A_{-q}+B_{-q})} (0)=J_{\left(\left(\frac{\gamma}{2(1-\beta)}(A+B)\right)_{-q}\right)} (0)=J_{\frac{\gamma}{2(1-\beta)}(A+B)} (q)-q.
\end{equation}
Therefore, by combining~\eqref{eq:AAMR_zeros}, \eqref{eq:AAMR_fixpoint}, \eqref{eq:AAMR_untranslate} and \eqref{eq:AAMR_untranslate_solution}, we get that $$J_{\gamma A}(q+x^\star)=J_{\frac{\gamma}{2(1-\beta)}(A+B)}(q),$$and thus statements~\cref{th:AAMR_i,th:AAMR_ii} have been proved. Finally, thanks to the strong monotonicity of $(\gamma A_{-q})^{(\beta)}$ or $(\gamma B_{-q})^{(\beta)}$, \Cref{fact:DR} asserts that the sequence $\big({J_{(\gamma A_{-q})^{(\beta)}}(x_n)}\big)_{n=0}^\infty$ converges strongly to the unique zero of $(\gamma A_{-q})^{(\beta)}+(\gamma B_{-q})^{(\beta)}$. Again, taking into account \eqref{eq:AAMR_zeros}, \eqref{eq:AAMR_untranslate} and \eqref{eq:AAMR_untranslate_solution}, this is equivalent to
\begin{equation*}
\left(\beta\left({J_{\gamma A}(q+x_n)}-q\right)\right)_{n=0}^\infty \quad\rightarrow\quad \beta\left(J_{\frac{\gamma}{2(1-\beta)}(A+B)}(q)-q\right),
\end{equation*}
which implies \cref{th:AAMR_iii} and completes the proof.
\end{proof}

As a direct consequence of~\Cref{th:AAMR} we derive the next result, which corresponds to \cite[Theorem~4.1]{AAMR}, and establishes the convergence of the AAMR method when we turn from resolvents to projectors.

\begin{corollary}[AAMR for best approximation problems]\label{cor:AAMR_projections}
	Consider two nonempty, closed and convex sets $A,B\subseteq\Hi$. Let ${(\lambda_n)}_{n=0}^\infty$ be a sequence in $[0,1]$ such that $\sum_{n\geq 0} \lambda_n(1-\lambda_n)=+\infty$ and fix any $\beta\in{]0,1[}$. Given $q\in\Hi$, choose any  $x_0\in\Hi$ and consider the sequence defined by
	\begin{equation*}
	x_{n+1}=(1-\lambda_n)x_n+\lambda_n(2\beta P_{B-q}-\Id)(2\beta P_{A-q}-\Id)(x_n), \quad n=0,1,2,\ldots.
	\end{equation*}
	Then, if  $A\cap B\neq\emptyset$ and $q-P_{A\cap B}(q)\in (N_A+N_B)\left(P_{A\cap B}(q)\right)$, the following assertions hold:
	\begin{enumerate}[label=(\roman*)]
	\item ${(x_{n+1}-x_n)}_{n=0}^\infty$ is strongly convergent to $0$;
	\item ${(x_n)}_{n=0}^\infty$ is weakly convergent to a point $$x^\star\in \Fix \left((2\beta P_{B-q}-\Id)(2\beta P_{A-q}-\Id)\right)$$ such that
	$
	P_A(q+x^\star)=P_{A\cap B}(q);
	$
	\item ${\left(P_A(q+x_n)\right)}_{n=0}^\infty$ is strongly convergent to $P_{A\cap B}(q)$.
	\end{enumerate}	
\end{corollary}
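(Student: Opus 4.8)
The plan is to obtain \Cref{cor:AAMR_projections} as a direct specialisation of \Cref{th:AAMR}, applied with the normal cone operators $N_A$ and $N_B$ playing the roles of the maximally monotone operators in the theorem and with $\gamma=1$. First I would observe that $N_A$ and $N_B$ are maximally monotone by \Cref{ex:subdif_normalcone}\cref{ex:normalcone}, so the theorem is applicable once its constraint qualification is checked. Next I would reconcile the two iterations. Since $J_{N_A}=P_A$ and $J_{N_B}=P_B$ by \Cref{ex:prox_proj}\cref{ex:proj}, \Cref{fact:translation} applied with $w=-q$ gives $J_{(N_A)_{-q}}=(P_A)_{-q}-q$, and a one-line computation shows $(P_A)_{-q}(x)-q=P_A(x+q)-q=P_{A-q}(x)$ for every $x\in\Hi$ (and likewise for $B$). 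Hence $J_{(N_A)_{-q}}=P_{A-q}$ and $J_{(N_B)_{-q}}=P_{B-q}$, so with $\gamma=1$ the recursion~\eqref{eq:AAMR} is literally the one in the statement.

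The crux is the identification of the limit data with the projection onto $A\cap B$. The key point is that $N_A$ and $N_B$ are cone-valued, so $(N_A+N_B)(x)=N_A(x)+N_B(x)$ is a convex cone for every $x$; consequently the positive scaling is absorbed, i.e.\ $\frac{1}{2(1-\beta)}(N_A+N_B)=N_A+N_B$ as operators, and therefore $J_{\frac{1}{2(1-\beta)}(N_A+N_B)}=J_{N_A+N_B}$. Writing $p:=P_{A\cap B}(q)$, I would then show $J_{N_A+N_B}(q)=p$. Indeed, $p\in J_{N_A+N_B}(q)$ is equivalent to $q-p\in(N_A+N_B)(p)$, which is exactly the standing assumption; moreover $J_{N_A+N_B}$ is at most single-valued, since $N_A+N_B$ is monotone and hence $\Id+N_A+N_B$ is injective. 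This argument simultaneously verifies the constraint qualification of \Cref{th:AAMR}: because $q\in(\Id+N_A+N_B)(p)$, we have $q\in\ran\bigl(\Id+\frac{1}{2(1-\beta)}(N_A+N_B)\bigr)=\ran(\Id+N_A+N_B)$.

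With these identifications in hand, the three conclusions of \Cref{th:AAMR} transfer verbatim. Assertion~(i) is unchanged. In~(ii), the theorem's $J_{\gamma A}(q+x^\star)$ is $J_{N_A}(q+x^\star)=P_A(q+x^\star)$, while $J_{\frac{\gamma}{2(1-\beta)}(A+B)}(q)=J_{N_A+N_B}(q)=p=P_{A\cap B}(q)$, yielding $P_A(q+x^\star)=P_{A\cap B}(q)$; the fixed-point set likewise becomes $\Fix\bigl((2\beta P_{B-q}-\Id)(2\beta P_{A-q}-\Id)\bigr)$. In~(iii), the strong limit $J_{\frac{\gamma}{2(1-\beta)}(A+B)}(q)$ becomes $P_{A\cap B}(q)$, so $P_A(q+x_n)\to P_{A\cap B}(q)$. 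I expect the only nontrivial step to be this limit identification---namely recognising that the positive factor $\frac{1}{2(1-\beta)}$ is neutralised by the cone structure of $N_A+N_B$, and that the resolvent of the (a priori possibly non-maximally monotone) sum $N_A+N_B$ is nevertheless single-valued---rather than any convergence analysis, which is inherited wholesale from \Cref{th:AAMR}.
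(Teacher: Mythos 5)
Your proposal is correct and follows essentially the same route as the paper: specialise \Cref{th:AAMR} to $N_A$, $N_B$ with $\gamma=1$, identify $J_{(N_A)_{-q}}=P_{A-q}$ and $J_{(N_B)_{-q}}=P_{B-q}$, and use the cone structure of $N_A+N_B$ to absorb the factor $\frac{1}{2(1-\beta)}$ so that the constraint qualification and the identity $P_{A\cap B}(q)=J_{\frac{1}{2(1-\beta)}(N_A+N_B)}(q)$ follow from the hypothesis. The only cosmetic differences are that the paper obtains the projector identities via $N_{(A-q)}=(N_A)_{-q}$ rather than via \Cref{fact:translation}, and that your explicit single-valuedness argument for $J_{N_A+N_B}$ (injectivity of $\Id+N_A+N_B$ by monotonicity) is left implicit in the paper, where it is subsumed by \Cref{th:AAMR}.
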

\begin{proof}
We know from Examples~\ref{ex:subdif_normalcone}\cref{ex:normalcone} and \ref{ex:prox_proj}\cref{ex:proj} that the normal cones $N_A$ and $N_B$ are maximally monotone operators with $J_{N_A}=P_A$ and $J_{N_B}=P_B$. Moreover, it can be easily checked that the normal cones to the displaced sets $A-q$ and $B-q$ coincide with the inner $(-q)$-perturbations of $N_A$ and $N_B$, i.e.,
\begin{equation*}
N_{(A-q)}=\left(N_A\right)_{-q} \quad \text{and} \quad N_{(B-q)}=\left(N_B\right)_{-q}.
\end{equation*}
Therefore, according to~\Cref{ex:prox_proj}\cref{ex:proj}, it holds that $J_{\left(\left(N_A\right)_{-q}\right)}=P_{A-q}$ and $J_{\left(\left(N_B\right)_{-q}\right)}=P_{B-q}$. Now observe that
\begin{equation*}
q-P_{A\cap B}(q)\in \left(N_A+N_B\right)\left(P_{A\cap B}(q)\right)=\frac{1}{2(1-\beta)}(N_A+N_B)\left(P_{A\cap B}(q)\right),
\end{equation*}
which implies that $q\in\ran\left(\Id+\frac{1}{2(1-\beta)}(N_A+N_B)\right)$ and $$P_{A\cap B}(q)=J_{\frac{1}{2(1-\beta)}(N_A+N_B)}(q).$$ Hence, the result follows from applying~\Cref{th:AAMR} to $N_A$ and $N_B$, with $\gamma=1$.
\end{proof}

\begin{example}[Proximity operator of the sum of two functions] Given two proper lower semicontinuous convex functions $f,g:\Hi\to{]-\infty,+\infty]}$, \cref{th:AAMR} can be applied to their subdifferentials $\partial f$ and $\partial g$. Hence, given a point $q\in\Hi$, this gives rise to a sequence $(x_n)_{n=0}^\infty$ such that
$$\left(\Prox_f(q+x_n)\right)_{n=0}^\infty \quad \to \quad \Prox_{\frac{1}{2(1-\beta)}(f+g)}(q),$$
provided that
\begin{equation}\label{eq:sumproxcond}
q\in\ran\left(\Id+\frac{1}{2(1-\beta)}(\partial f + \partial g)\right).
\end{equation}
Note that the latter holds for all $q\in\Hi$ when $\partial f+\partial g=\partial (f+g)$, so a sufficient condition for~\eqref{eq:sumproxcond} is
$$0\in\sri(\dom f-\dom g),$$
(see, e.g.,~\cite[Corollary~16.38]{BC11}), where $\sri$ stands for the \emph{strong relative interior}.
\end{example}

\section{Parallel AAMR splitting for the resolvent of a finite sum}\label{sec:Parallel}

In this section, we discuss how to implement the AAMR scheme to compute the resolvent of a finite sum of maximally monotone operators. Given a collection of $r$ operators $A_i:\Hi\tto\Hi$, $i=1,2,\ldots,r$, and $q\in\ran\left(\Id+\sum_{i=1}^rA_i\right)$, the problem of interest is now
\begin{equation}\label{eq:par_problem}
\text{Find } p\in J_{\sum_{i=1}^rA_i}(q).
\end{equation}
To transform this problem into a two-operators sum problem, we turn to the following standard product space reformulation, which was originally proposed by Pierra~\cite{Pierra}. Consider the product Hilbert space $\bHi:=\Hi^r=\Hi\times\stackrel{(r)}{\cdots}\times\Hi$, and define the operator $\bs{B}:\bHi\tto\bHi$ by
\begin{equation}\label{eq:prod_operatorB}
\bs{B}(\bs{x}):=A_1(x_1)\times A_2(x_2)\times \cdots \times A_r(x_r), \quad \forall \bs{x}=(x_1,x_2,\ldots,x_r)\in\bHi,
\end{equation}
and the set $\bs{D}:=\{(x,\ldots,x)\in\bHi : x\in\Hi\}$, commonly known as the \emph{diagonal}. We denote by $\bs{j}:\Hi\to\bs{D}$ the canonical embedding that maps any $x\in\Hi$ to $\bs{j}(x)=(x,x,\ldots,x)\in\bs{D}$.

The following result collects the fundamentals of the product space reformulation.

\begin{fact}\label{fact:prod_space}
The following hold:
\begin{enumerate}[label=(\roman*)]
\item The resolvent of $\bs{B}$ can be computed as
\begin{equation*}
J_{\bs{B}}(\bs{x})=J_{A_1}(x_1)\times J_{A_2}(x_2)\times\cdots\times J_{ A_r}(x_r), \quad \forall \bs{x}=(x_1,x_2,\ldots,x_r)\in\bHi.
\end{equation*}
Further, the operator $\bs{B}$ is (maximally) monotone whenever $A_1, A_2,\ldots, A_r$ are so.

\item The normal cone to $\bs{D}$ is given by
\begin{equation*}
N_{\bs{D}}(\bs{x}) =\left\{\begin{array}{ll}\{\bs{u}=(u_1,u_2,\ldots,u_r)\in\bHi : \sum_{i=1}^r u_i=0 \}, &\text{if }\bs{x}\in \bs{D},\\
\emptyset, & \text{otherwise.}\end{array}\right.
\end{equation*}
It is a maximally monotone operator and
\begin{equation*}
J_{N_{\bs{D}}}(\bs{x})=P_{\bs{D}}(\bs{x})=\bs{j}\left(\frac{1}{r}\sum_{i=1}^r x_i\right), \quad \forall \bs{x}=(x_1,x_2,\ldots,x_r)\in\bHi.
\end{equation*}
\item $\zer\left( \bs{B}+N_{\bs{D}}\right)=\bs{j}\left(\zer\left(\sum_{i=1}^r A_i\right)\right)$.\label{fact:prod_spaceIII}
\end{enumerate}
\end{fact}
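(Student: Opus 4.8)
The plan is to treat the three assertions in turn, each reducing to an elementary coordinate-wise computation once the definitions are unwound. For (i) I would start from the resolvent definition (\Cref{def:resolvent}): for $\bs{x}=(x_1,\dots,x_r)$ and $\bs{p}=(p_1,\dots,p_r)$,
\[
\bs{p}\in J_{\bs{B}}(\bs{x}) \Leftrightarrow \bs{x}\in\bs{p}+\bs{B}(\bs{p}) \Leftrightarrow x_i\in p_i+A_i(p_i)\ \forall i \Leftrightarrow p_i\in J_{A_i}(x_i)\ \forall i,
\]
which is exactly the claimed product formula. Monotonicity of $\bs{B}$ follows by summing the monotonicity inequalities of the $A_i$, since the inner product on $\bHi$ is the sum of the coordinate inner products. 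For maximal monotonicity I would invoke Minty's theorem (\Cref{fact:Minty}): since $\ran(\Id+\bs{B})=\ran(\Id+A_1)\times\cdots\times\ran(\Id+A_r)$, maximal monotonicity of each $A_i$ makes every factor equal to $\Hi$, whence $\ran(\Id+\bs{B})=\bHi$.

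For (ii), the key observation is that $\bs{D}$ is a closed linear subspace of $\bHi$. I would first compute its orthogonal complement by testing an arbitrary $\bs{u}$ against $\bs{j}(x)$: the quantity $\langle\bs{u},\bs{j}(x)\rangle=\langle\sum_{i=1}^r u_i,x\rangle$ vanishes for all $x\in\Hi$ precisely when $\sum_{i=1}^r u_i=0$. Because the normal cone to a subspace at one of its points coincides with the orthogonal complement (the inequalities in the definition of $N_{\bs{D}}$ sharpen to equalities once one replaces $\bs{c}-\bs{x}$ by $-(\bs{c}-\bs{x})$) and is empty elsewhere, this yields the stated formula. Maximal monotonicity of $N_{\bs{D}}$ is then immediate from \Cref{ex:subdif_normalcone}\cref{ex:normalcone}, and the identity $J_{N_{\bs{D}}}=P_{\bs{D}}$ from \Cref{ex:prox_proj}\cref{ex:proj}; the explicit projector is obtained by minimizing $y\mapsto\sum_{i=1}^r\|x_i-y\|^2$ over $\Hi$, whose unique minimizer is the barycenter $\frac1r\sum_{i=1}^r x_i$.

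For (iii) I would combine the two previous formulas. A point $\bs{x}$ can lie in $\zer(\bs{B}+N_{\bs{D}})$ only if $N_{\bs{D}}(\bs{x})\neq\emptyset$, which by (ii) forces $\bs{x}\in\bs{D}$, say $\bs{x}=\bs{j}(x)$. For such $\bs{x}$, the condition $\bs{0}\in\bs{B}(\bs{x})+N_{\bs{D}}(\bs{x})$ means there exist $u_i\in A_i(x)$ and $\bs{v}\in N_{\bs{D}}(\bs{x})$ with $u_i+v_i=0$ for all $i$; summing over $i$ and using $\sum_{i=1}^r v_i=0$ gives $\sum_{i=1}^r u_i=0$, i.e.\ $0\in\sum_{i=1}^r A_i(x)$, and conversely any such choice of $u_i$ produces an admissible $\bs{v}=-\bs{u}\in N_{\bs{D}}(\bs{x})$. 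Hence $\bs{x}=\bs{j}(x)$ with $x\in\zer(\sum_{i=1}^r A_i)$, which is the desired equality. The only real care needed throughout is the bookkeeping with set-valued sums and the empty-value convention off the diagonal in (iii); everything else is routine coordinate-wise manipulation, which is why I expect this to be the single delicate point rather than a genuine obstacle.
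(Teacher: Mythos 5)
Your proof is correct in all three parts, but it is worth noting that the paper does not actually prove this statement at all: it is stated as a Fact and dispatched with a citation to \cite[Proposition~25.4]{BC11}. So your argument is not a variant of the paper's proof — it is a replacement for a proof the paper outsources. What you do is reconstruct the result from first principles using only tools already present in the paper: the coordinate-wise unwinding of $\bs{x}\in\bs{p}+\bs{B}(\bs{p})$ via \Cref{def:resolvent} for the product formula, Minty's theorem (\Cref{fact:Minty}) together with $\ran(\Id+\bs{B})=\prod_{i=1}^r\ran(\Id+A_i)$ for maximal monotonicity of $\bs{B}$, the orthogonal-complement description of the normal cone to the closed subspace $\bs{D}$ plus \Cref{ex:subdif_normalcone} and \Cref{ex:prox_proj} for part (ii), and the nonemptiness argument forcing $\bs{x}\in\bs{D}$ followed by the summation trick (choose $\bs{v}=-\bs{u}$ for the converse) for part (iii). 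All of these steps check out; in particular the two delicate points — that $N_{\bs{D}}(\bs{x})\neq\emptyset$ already pins $\bs{x}$ to the diagonal, and that the set-valued sum in (iii) is handled by exhibiting explicit selections $u_i\in A_i(x)$ — are treated correctly. The trade-off is the usual one: the paper's citation is economical and leans on a standard reference, while your version makes the paper self-contained and exposes that the fact is nothing deeper than coordinate-wise bookkeeping; it closely mirrors, in spirit, the paper's own proof style in \Cref{prop:par_resolv}, which uses exactly the same diagonal/summation manipulations.
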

\begin{proof}
See, e.g.,~\cite[Proposition~25.4]{BC11}.
\end{proof}

According to the previous result, the product space reformulation is a powerful trick for reducing the problem of finding zeros of the sum of finitely many operators to an equivalent problem involving only two, while keeping their monotonicity properties.  As we show next, it turns out to be very useful in our context, where we are interested in computing the resolvent of the sum.

\begin{proposition}\label{prop:par_resolv}
For any $\bs{x}=(x,x,\ldots,x)\in\bs{D}$, we have
\begin{equation*}
J_{\bs{B}+N_{\bs{D}}}(\bs{x})=\bs{j}\left(J_{\frac{1}{r}\sum_{i=1}^{r}A_i}\left( x \right)\right).
\end{equation*}
Consequently,
\begin{equation*}
\ran\left(\Id+\bs{B}+N_{\bs{D}}\right)\cap\bs{D}=\bs{j}\left(\ran\left(\Id+\frac{1}{r}\sum_{i=1}^rA_i\right)\right).
\end{equation*}
\end{proposition}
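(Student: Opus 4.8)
The plan is to unfold the defining relation of the resolvent and to exploit the fact that the normal cone $N_{\bs{D}}$ forces its argument onto the diagonal, after which a single summation over the $r$ coordinates annihilates the normal-cone contribution and reconstitutes the averaged sum.

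First I would fix $\bs{x}=\bs{j}(x)=(x,\ldots,x)\in\bs{D}$ and expand
$$\bs{p}\in J_{\bs{B}+N_{\bs{D}}}(\bs{x})\iff \bs{x}\in\bs{p}+\bs{B}(\bs{p})+N_{\bs{D}}(\bs{p}).$$
Since $\dom N_{\bs{D}}=\bs{D}$ by \Cref{fact:prod_space}, any candidate $\bs{p}$ must lie in $\bs{D}$, so $\bs{p}=\bs{j}(p)$ for some $p\in\Hi$. Writing $\bs{B}(\bs{p})=A_1(p)\times\cdots\times A_r(p)$ from~\eqref{eq:prod_operatorB} and $N_{\bs{D}}(\bs{p})=\{(u_1,\ldots,u_r):\sum_{i=1}^r u_i=0\}$ from \Cref{fact:prod_space}, the membership becomes the existence of selections $a_i\in A_i(p)$ and multipliers $u_i\in\Hi$ with $\sum_{i=1}^r u_i=0$ satisfying the $r$ identities
$$x=p+a_i+u_i,\qquad i=1,\ldots,r.$$

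The crucial step is to sum these $r$ identities over $i$. Because $\sum_i u_i=0$, the normal-cone terms cancel, and dividing by $r$ yields $x=p+\frac1r\sum_{i=1}^r a_i\in p+\frac1r\sum_{i=1}^r A_i(p)$, i.e. $p\in J_{\frac1r\sum_{i=1}^rA_i}(x)$; this gives the inclusion $J_{\bs{B}+N_{\bs{D}}}(\bs{x})\subseteq\bs{j}(J_{\frac1r\sum_i A_i}(x))$. For the reverse inclusion I would start from $p\in J_{\frac1r\sum_i A_i}(x)$, pick $a_i\in A_i(p)$ with $x=p+\frac1r\sum_i a_i$, and simply define $u_i:=x-p-a_i$; a one-line computation shows $\sum_i u_i=rx-rp-\sum_i a_i=0$, so $(u_1,\ldots,u_r)\in N_{\bs{D}}(\bs{p})$ and $\bs{x}\in\bs{p}+\bs{B}(\bs{p})+N_{\bs{D}}(\bs{p})$, whence $\bs{j}(p)\in J_{\bs{B}+N_{\bs{D}}}(\bs{x})$. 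Combining both inclusions proves the resolvent formula. For the consequence I would pass to ranges via $\ran(\Id+T)=\dom J_T$: a diagonal point $\bs{j}(x)$ lies in $\ran(\Id+\bs{B}+N_{\bs{D}})$ exactly when $J_{\bs{B}+N_{\bs{D}}}(\bs{j}(x))\neq\emptyset$, which by the formula just established is equivalent to $J_{\frac1r\sum_i A_i}(x)\neq\emptyset$, i.e. $x\in\ran(\Id+\frac1r\sum_i A_i)$; intersecting with $\bs{D}$ and applying $\bs{j}$ yields the stated equality.

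The main obstacle is essentially bookkeeping: keeping the existential quantifiers over the selections $a_i\in A_i(p)$ and the multipliers $u_i$ straight in both directions, and in particular verifying in the reverse inclusion that the $u_i$ built from an arbitrary selection really do sum to zero so as to land in $N_{\bs{D}}(\bs{p})$. The only genuinely substantive move is recognizing that summation over the coordinates is precisely what eliminates the normal-cone component and recovers the averaged operator $\frac1r\sum_i A_i$; everything else is routine verification.
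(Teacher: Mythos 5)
Your proposal is correct and follows essentially the same route as the paper's own proof: in the forward direction you use nonemptiness of $N_{\bs{D}}(\bs{p})$ to force $\bs{p}\in\bs{D}$, sum the $r$ coordinate relations so the normal-cone terms cancel, and in the reverse direction you define $u_i:=x-p-a_i$ and verify they sum to zero. Your explicit derivation of the range identity via $\ran(\Id+T)=\dom J_T$ is a nice touch that the paper leaves implicit.
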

\begin{proof}
Fix some $\bs{x}=(x,x,\ldots,x)\in\bs{D}$. To prove the direct inclusion, pick any $\bs{p}\in J_{\bs{B}+N_{\bs{D}}}(\bs{x})$. Then, we have that
\begin{equation*}
\bs{x}\in \bs{p}+\bs{B}(\bs{p})+N_{\bs{D}}(\bs{p}).
\end{equation*}
This ensures the nonemptyness of $N_{\bs{D}}(\bs{p})$, and then it necessarily holds that $\bs{p}=\bs{j}(p)\in\bs{D}$, for some $p\in\Hi$. Moreover, there must exist some vector $\bs{u}=(u_1,u_2,\ldots,u_r)\in\bHi$ with $\sum_{i=1}^ru_i=0$ such that
\begin{equation*}
x \in p + A_i(p)+u_i,\quad\text{for all } i=1,2,\ldots,r.
\end{equation*}
Thus, by adding up all these equations and dividing by $r$, we deduce that
$x\in p+\frac{1}{r}\sum_{i=1}^rA_i(p)$,
or equivalently, that $p\in J_{\frac{1}{r}\sum_{i=1}^{r}A_i}\left( x \right)$.  

To prove the reverse inclusion, take any $\bs{p}=\bs{j}(p)$ with $p\in J_{\frac{1}{r}\sum_{i=1}^{r}A_i}\left( x \right)$. Then, for each $i=1,2,\ldots,r$, there exists $a_i\in A_i(p)$ such that
\begin{equation*}
x= p+\frac{1}{r}\sum_{i=1}^ra_i \Leftrightarrow r(x-p)-\sum_{i=1}^ra_i=0 \Leftrightarrow \sum_{i=1}^r(x-p-a_i)=0.
\end{equation*}
Let $\bs{a}:=(a_1,a_2,\ldots,a_r)$ and $\bs{u}:=(u_1,u_2,\ldots,u_r)$, where $u_i:=x-p-a_i$, for each $i=1,2,\ldots,r$. By construction, we get that $\bs{x}=\bs{p}+\bs{a}+\bs{u}$, with $\bs{a}\in\bs{B}{(\bs{p})}$ and $\bs{u}\in N_{\bs{D}}(\bs{p})$. This implies $\bs{p}\in J_{\bs{B}+N_{\bs{D}}}(\bs{x})$, which completes the proof.
\end{proof}

Thanks to~\Cref{prop:par_resolv}, problem~\eqref{eq:par_problem} can be fitted within the framework of~\Cref{th:AAMR}, allowing us to derive the following parallel splitting algorithm.

\begin{theorem}[Parallel AAMR splitting algorithm]\label{th:par_AAMR}
Let $A_i:\Hi\tto \Hi$ be maximally monotone operators for $i=1,2,\ldots,r$, let $\gamma>0$ and let ${(\lambda_n)}_{n=0}^\infty$ be a sequence in $[0,1]$ such that $\sum_{n\geq 0} \lambda_n(1-\lambda_n)=+\infty$. Let $\beta\in{]0,1[}$ and suppose that $q\in\ran\left(\Id+\frac{\gamma}{2r(1-\beta)}\sum_{i=1}^rA_i\right)$. Given $x_{1,0},x_{2,0},\ldots,x_{r,0}\in\Hi$, set
\begin{equation}\label{eq:par_AAMR_alg}
\begin{aligned}
&\text{for } n=0, 1, 2, \ldots:\\
&\left\lfloor\begin{array}{l}
p_n=\frac{1}{r}\sum_{i=1}^r x_{i,n},\\
\text{for } i=1, 2,\ldots, r:\\
\left\lfloor\begin{array}{l}
x_{i,n+1}=(1-\lambda_n)x_{i,n}+\lambda_n\left(2\beta J_{\left(\gamma (A_i)_{-q}\right)}-\Id\right)\left(2\beta p_n -x_{i,n}\right).
\end{array}\right.
\end{array}\right.
\end{aligned}
\end{equation}
Then, the following hold:
	\begin{enumerate}[label=(\roman*)]
		\item $({{x}_{i,n+1}-{x}_{i,n})}_{n=0}^\infty$ converges strongly to $0$, for all $i=1,2,\ldots,r$;
		\item ${({x}_{i,n})}_{n=0}^\infty$ converges weakly to ${x_i}^\star\in\Hi$, for all $i=1,2,\ldots,r$, and
$$q+\frac{1}{r}\sum_{i=1}^rx_i^\star=J_{\frac{\gamma}{2r(1-\beta)}\sum_{i=1}^rA_i}(q);$$
		\item $\left(q+p_n\right)_{n=0}^\infty$ converges strongly to $J_{\frac{\gamma}{2r(1-\beta)}\sum_{i=1}^rA_i}(q)$.
	\end{enumerate}
\end{theorem}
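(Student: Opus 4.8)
The plan is to apply the two-operator result \Cref{th:AAMR} in the product Hilbert space $\bHi$, taking the first operator to be the normal cone $N_{\bs{D}}$ to the diagonal and the second to be the product operator $\bs{B}$ from \eqref{eq:prod_operatorB}, with the perturbation vector $\bs{q}:=\bs{j}(q)\in\bs{D}$. First I would recognize that the parallel recursion \eqref{eq:par_AAMR_alg} is exactly the iteration \eqref{eq:AAMR} written componentwise in $\bHi$. Since $\bs{j}(q)$ lies in the subspace $\bs{D}$ and $N_{\bs{D}}$ is a cone, one has $(\gamma N_{\bs{D}})_{-\bs{q}}=N_{\bs{D}}$, so by \Cref{fact:prod_space} the first modified reflected resolvent $2\beta J_{(\gamma N_{\bs{D}})_{-\bs{q}}}-\Id=2\beta P_{\bs{D}}-\Id$ sends $\bs{x}_n=(x_{1,n},\dots,x_{r,n})$ to the vector with components $2\beta p_n-x_{i,n}$, which is precisely the averaging step. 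Likewise $(\gamma\bs{B})_{-\bs{q}}$ is the product of the operators $\gamma(A_i)_{-q}$, so $2\beta J_{(\gamma\bs{B})_{-\bs{q}}}-\Id$ acts componentwise as $2\beta J_{(\gamma(A_i)_{-q})}-\Id$. Hence \eqref{eq:par_AAMR_alg} coincides with \eqref{eq:AAMR} for $A=N_{\bs{D}}$ and $B=\bs{B}$, and $N_{\bs{D}}$, $\bs{B}$ are maximally monotone by \Cref{fact:prod_space}.

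Next I would verify the constraint qualification of \Cref{th:AAMR}, namely $\bs{q}\in\ran(\Id+\frac{\gamma}{2(1-\beta)}(N_{\bs{D}}+\bs{B}))$. The key device is to absorb the scalar $\frac{\gamma}{2(1-\beta)}$ into the operators: setting $\widetilde{A}_i:=\frac{\gamma}{2(1-\beta)}A_i$ and letting $\widetilde{\bs{B}}$ be the associated product operator, one has $\widetilde{\bs{B}}+N_{\bs{D}}=\frac{\gamma}{2(1-\beta)}(\bs{B}+N_{\bs{D}})$ because $N_{\bs{D}}$ is a cone. Applying \Cref{prop:par_resolv} to the family $(\widetilde{A}_i)$ then yields $\ran(\Id+\frac{\gamma}{2(1-\beta)}(\bs{B}+N_{\bs{D}}))\cap\bs{D}=\bs{j}(\ran(\Id+\frac{\gamma}{2r(1-\beta)}\sum_{i=1}^r A_i))$, so the present hypothesis $q\in\ran(\Id+\frac{\gamma}{2r(1-\beta)}\sum_i A_i)$ places $\bs{q}=\bs{j}(q)$ in the required range. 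The remaining parameter conditions are inherited directly.

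Finally I would translate the three conclusions of \Cref{th:AAMR} back to $\Hi$. As strong and weak convergence in $\bHi$ are componentwise, $\bs{x}_{n+1}-\bs{x}_n\to0$ and $\bs{x}_n\rightharpoonup\bs{x}^\star=(x_1^\star,\dots,x_r^\star)$ give (i) and the first part of (ii). For the limit identities I would use $J_{\gamma N_{\bs{D}}}=P_{\bs{D}}$, which averages and re-embeds, so $P_{\bs{D}}(\bs{q}+\bs{x}^\star)=\bs{j}(q+\frac1r\sum_i x_i^\star)$ and $P_{\bs{D}}(\bs{q}+\bs{x}_n)=\bs{j}(q+p_n)$, while \Cref{prop:par_resolv} evaluated at $\bs{q}\in\bs{D}$ gives $J_{\frac{\gamma}{2(1-\beta)}(\bs{B}+N_{\bs{D}})}(\bs{q})=\bs{j}(J_{\frac{\gamma}{2r(1-\beta)}\sum_i A_i}(q))$. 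Conclusion (ii) of \Cref{th:AAMR} then reads $\bs{j}(q+\frac1r\sum_i x_i^\star)=\bs{j}(J_{\frac{\gamma}{2r(1-\beta)}\sum_i A_i}(q))$, and injectivity of $\bs{j}$ yields the second part of (ii); conclusion (iii) gives strong convergence of $\bs{j}(q+p_n)$, and since $\bs{j}$ is a scaled isometry this is equivalent to the strong convergence of $q+p_n$ asserted in (iii).

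I expect the main obstacle to be the careful bookkeeping of the three scalings — the resolvent parameter $\gamma$, the constant $\frac{\gamma}{2(1-\beta)}$, and the averaging factor $\frac1r$ coming from the diagonal — together with the two structural facts that $N_{\bs{D}}$ is a cone (so positive scalings leave it unchanged) and that $\bs{q}$ lies in the subspace $\bs{D}$ (so $(N_{\bs{D}})_{-\bs{q}}=N_{\bs{D}}$). Getting these exactly right is what makes \Cref{prop:par_resolv} applicable to the shifted family $\widetilde{A}_i$ and what produces the $\frac1r$ inside the final resolvent of the sum.
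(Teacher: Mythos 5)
Your proposal is correct and takes essentially the same route as the paper's own proof: rewrite the iteration in $\bHi$ with $A=N_{\bs{D}}$ and $B=\bs{B}$, use that $\bs{D}$ is a subspace containing $\bs{q}$ (so $(\gamma N_{\bs{D}})_{-\bs{q}}=N_{\bs{D}}$ and $J_{\gamma N_{\bs{D}}}=P_{\bs{D}}$), invoke \Cref{prop:par_resolv} for the constraint qualification and the resolvent identity, and then apply \Cref{th:AAMR}. The only difference is presentational: you spell out the scaling device $\widetilde{A}_i=\frac{\gamma}{2(1-\beta)}A_i$ together with the cone property of $N_{\bs{D}}$ needed to apply \Cref{prop:par_resolv} at the scaled operators, a step the paper performs implicitly when citing that proposition.
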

\begin{proof}
Let $\bs{B}$ be the operator defined as in~\eqref{eq:prod_operatorB}, and consider the normal cone to the diagonal set $N_{\bs{D}}$. By~\Cref{fact:prod_space}, both operators are maximally monotone. For each $n=0,1,\ldots$, set
$
\bs{x}_n:=(x_{1,n},x_{2,n},\ldots,x_{r,n})\in\bHi$ and $\bs{p}_n:=\bs{j}(p_n)\in\bs{D}.
$
Observe that $\bs{p}_n=P_{\bs{D}}(\bs{x}_n)=J_{N_{\bs{D}}}(\bs{x}_n)$. Further, set $\bs{q}:=\bs{j}(q)$ and note that, since $\bs{D}$ is a linear subspace and $\bs{q}\in\bs{D}$, we have
$$N_{\bs{D}}=N_{\bs{D}-\bs{q}}=\left(N_{\bs{D}}\right)_{-\bs{q}}=\left(\gamma N_{\bs{D}}\right)_{-\bs{q}}.$$
Therefore, the iterative scheme in~\eqref{eq:par_AAMR_alg} can be expressed as
\begin{equation*}
\bs{x}_{n+1}=(1-\lambda_n)\bs{x}_n+\lambda_n\left(2\beta J_{\left(\gamma \bs{B}_{-\bs{q}}\right)}-\Id\right)\left(2\beta J_{\left(\gamma(N_{\bs{D}})_{-\bs{q}}\right)}-\Id\right)(\bs{x}_n),
\end{equation*}
for $n=0,1,2,\ldots$.
According to~\Cref{prop:par_resolv}, we have that
\begin{equation*}
q\in\ran\left(\Id+\frac{\gamma}{2r(1-\beta)}\sum_{i=1}^rA_i\right) \Leftrightarrow \bs{q}\in \ran\left(\Id+\frac{\gamma}{2(1-\beta)}(\bs{B}+N_{\bs{D}})\right),
\end{equation*}
and
\begin{equation*}
J_{\frac{\gamma}{2(1-\beta)}(\bs{B}+N_{\bs{D}})}(\bs{q})=\bs{j}\left(J_{\frac{\gamma}{2r(1-\beta)}\sum_{i=1}^{r}A_i}\left( q\right)\right).
\end{equation*}
Finally, note that for any $\bs{x}=(x_1,x_2,\ldots,x_r)\in\bHi$, the shadows can be expressed as
\begin{equation*}
J_{\gamma N_{\bs{D}}}(\bs{q}+\bs{x})=P_{\bs{D}}(\bs{q}+\bs{x})=\bs{j}\left(q+\frac{1}{r}\sum_{i=1}^r x_i\right).
\end{equation*}
In particular, $J_{\gamma N_{\bs{D}}}(\bs{q}+\bs{x}_n)=\bs{j}(q+p_n)$. Hence, the result follows from applying~\Cref{th:AAMR} to $\bs{B}$ and $N_{\bs{D}}$.
\end{proof}

\begin{remark}
As in~\Cref{cor:AAMR_projections}, if we choose the operators involved in~\Cref{th:par_AAMR} to be the normal cones to $r$ closed and convex sets $C_1,C_2,\ldots,C_r\subseteq\Hi$ satisfying
\begin{equation*}
q-P_{\bigcap_{i=1}^r C_i}(q)\in \sum_{i=1}^r N_{C_i}\left(P_{\bigcap_{i=1}^r C_i}(q)\right),
\end{equation*}
we can deduce the AAMR algorithm  established in~\cite[Theorem~5.1]{AAMR} for finding the projection of the point  $q$ onto the intersection of finitely many sets.
\end{remark}
\subsection{An alternative parallel splitting}

A different parallel algorithm for solving~\eqref{eq:par_problem}, involving modified reflected resolvents of the operators, can be constructed. Recall that the AAMR method for two operators (\Cref{th:AAMR}) has been shown to be, in essence, a Douglas--Rachford iteration for finding a zero of the sum of the $\beta$-strengthenings. The following result is a generalization of~\Cref{prop:zeros}, and characterizes the set of zeros of the sum of the $\beta$-strengthenings of a finite collection of operators. The proof is completely analogous so it is omitted.

\begin{proposition}\label{prop:par_zeros}
Let $A_i:\Hi\tto \Hi$ be some operators  for $i=1,2,\ldots,r$ and let $\beta\in{]0,1[}$. Then, the set of zeros of the sum of their $\beta$-strengthenings is given by
\begin{equation*}\label{eq:par_zeros}
\zer\left(\sum_{i=1}^rA_i^{(\beta)}\right)=\beta J_{\frac{1}{r(1-\beta)}\sum_{i=1}^rA_i} (0).
\end{equation*}
Therefore, $\zer\left(\sum_{i=1}^rA_i^{(\beta)}\right)\neq\emptyset$ if and only if $ 0\in\ran\left(\Id+\frac{1}{r(1-\beta)}\sum_{i=1}^rA_i\right).$
\end{proposition}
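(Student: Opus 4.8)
The plan is to reproduce the chain of equivalences used in the proof of \Cref{prop:zeros}, the only new feature being that summing $r$ copies of the strengthening term produces the coefficient $r(1-\beta)$ in place of $2(1-\beta)$. First I would fix an arbitrary $x\in\Hi$ and unfold the definition of the $\beta$-strengthening: since $A_i^{(\beta)}(x)=A_i(x/\beta)+(1-\beta)\frac{x}{\beta}$ for each $i$, the membership $0\in\sum_{i=1}^r A_i^{(\beta)}(x)$ is equivalent to the existence of selections $a_i\in A_i(x/\beta)$ with $\sum_{i=1}^r a_i + r(1-\beta)\frac{x}{\beta}=0$. This is precisely the statement that $0\in\sum_{i=1}^r A_i(x/\beta)+r(1-\beta)\frac{x}{\beta}$, where the constant shift accumulates $r$ times rather than twice.

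Next I would divide this inclusion through by the positive scalar $r(1-\beta)$, which is legitimate since $\beta\in{]0,1[}$, to rewrite it as $0\in\frac{x}{\beta}+\frac{1}{r(1-\beta)}\sum_{i=1}^r A_i(x/\beta)$. By the definition of the resolvent (\Cref{def:resolvent}), this says exactly that $\frac{x}{\beta}\in J_{\frac{1}{r(1-\beta)}\sum_{i=1}^r A_i}(0)$, and multiplying by $\beta$ gives $x\in\beta J_{\frac{1}{r(1-\beta)}\sum_{i=1}^r A_i}(0)$. Reading this string of equivalences in both directions yields the claimed identity for the zero set.

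For the final assertion I would argue that, because $\beta>0$, the scaled set $\beta J_{\frac{1}{r(1-\beta)}\sum_{i=1}^r A_i}(0)$ is nonempty if and only if $J_{\frac{1}{r(1-\beta)}\sum_{i=1}^r A_i}(0)$ is, i.e.\ if and only if $0\in\dom J_{\frac{1}{r(1-\beta)}\sum_{i=1}^r A_i}=\ran\left(\Id+\frac{1}{r(1-\beta)}\sum_{i=1}^r A_i\right)$, the last equality being the identity $\dom J_C=\ran(\Id+C)$ recorded immediately after \Cref{def:resolvent}.

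I do not anticipate a genuine obstacle: the argument is the two-operator proof almost verbatim, the difference being purely bookkeeping in the constant. The only point deserving a moment's care is the set-valued arithmetic, since $\sum_{i=1}^r A_i$ carries an implicit existential quantifier over selections $a_i\in A_i(\cdot)$; but this is handled exactly as in \Cref{prop:zeros}, with the scalar simply accumulating as $r(1-\beta)$ instead of $2(1-\beta)$, which is why the authors describe the proof as completely analogous and omit it.
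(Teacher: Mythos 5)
Your proof is correct and is exactly the argument the paper has in mind: it reproduces the chain of equivalences from the proof of \Cref{prop:zeros} (unfold the strengthenings at $x/\beta$, collect the $r$ copies of the shift $(1-\beta)\frac{x}{\beta}$, divide by $r(1-\beta)$, and read off the resolvent), which is precisely why the authors omit it as ``completely analogous.'' The set-valued bookkeeping with selections $a_i\in A_i(x/\beta)$ and the final identification of nonemptiness with $0\in\ran\bigl(\Id+\frac{1}{r(1-\beta)}\sum_{i=1}^r A_i\bigr)$ are both handled correctly.
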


In view of the previous proposition, we derive the following alternative splitting
algorithm for computing the resolvent of a finite sum of maximally monotone operators.

\begin{theorem}[Alternative parallel AAMR-like splitting algorithm]\label{th:par_AAMR2}%
Let $A_i:\Hi\tto \Hi$ be maximally monotone operators for $i=1,2,\ldots,r$,  let ${\gamma>0}$ and let ${(\lambda_n)}_{n=0}^\infty$ be a sequence in $[0,1]$ such that $\sum_{n\geq 0} \lambda_n(1-\lambda_n)=+\infty$. Let $\beta\in{]0,1[}$ and suppose that $q\in\ran\left(\Id+\frac{\gamma}{r(1-\beta)}\sum_{i=1}^rA_i\right)$. Given $x_{1,0},x_{2,0},\ldots,x_{r,0}\in\Hi$, set
\begin{equation}\label{eq:par_AAMR2_alg}
\begin{aligned}
&\text{for } n=0, 1, 2, \ldots:\\
&\left\lfloor\begin{array}{l}
p_n=\frac{1}{r}\sum_{i=1}^r x_{i,n},\\
\text{for } i=1, 2,\ldots, r:\\
\left\lfloor\begin{array}{l}
x_{i,n+1}=(1-\lambda_n)x_{i,n}+\lambda_n\left(2\beta J_{\left(\gamma (A_i)_{-q}\right)}-\Id\right)\left(2 p_n -x_{i,n}\right).
\end{array}\right.
\end{array}\right.
\end{aligned}
\end{equation}
Then, the following hold:
	\begin{enumerate}[label=(\roman*)]
		\item $({{x}_{i,n+1}-{x}_{i,n})}_{n=0}^\infty$ converges strongly to $0$, for all $i=1,2,\ldots,r$;
		\item ${({x}_{i,n})}_{n=0}^\infty$ converges weakly to ${x_i}^\star\in\Hi$, for all $i=1,2,\ldots,r$, and
$$q+\frac{1}{\beta r}\sum_{i=1}^rx_i^\star=J_{\frac{\gamma}{r(1-\beta)}\sum_{i=1}^rA_i}(q);$$
		\item $\left(q+\frac{1}{\beta} p_n\right)_{n=0}^\infty$ converges strongly to $J_{\frac{\gamma}{r(1-\beta)}\sum_{i=1}^rA_i}(q)$.
	\end{enumerate}
\end{theorem}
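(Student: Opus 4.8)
The plan is to imitate the argument used for~\Cref{th:par_AAMR}: pass to the product space $\bHi=\Hi^r$, recognize~\eqref{eq:par_AAMR2_alg} as a recursion covered by one of the results we already have, and then translate the conclusions back coordinatewise. The decisive structural difference from~\Cref{th:par_AAMR} is that here the modified reflector is applied \emph{only} to the product operator, whereas the diagonal constraint is handled by the \emph{ordinary} reflection $2p_n-x_{i,n}$. Because the two strengthening parameters no longer agree (it is $\beta$ on the $\bs{B}$-side and $1$ on the $N_{\bs{D}}$-side), the iteration is not an AAMR iteration and cannot be reduced to~\Cref{th:AAMR}; instead it must be matched directly against the Douglas--Rachford scheme of~\Cref{fact:DR}.

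Concretely, I would set $\bs{B}$ as in~\eqref{eq:prod_operatorB}, $\bs{q}:=\bs{j}(q)$, $\bs{x}_n:=(x_{1,n},\ldots,x_{r,n})$ and $\bs{p}_n:=\bs{j}(p_n)=P_{\bs{D}}(\bs{x}_n)=J_{N_{\bs{D}}}(\bs{x}_n)$. Since $J_{(\gamma\bs{B}_{-\bs{q}})}$ acts coordinatewise as $J_{(\gamma(A_i)_{-q})}$ (by~\Cref{fact:prod_space}) and $2p_n-x_{i,n}$ is the $i$-th coordinate of $R_{N_{\bs{D}}}(\bs{x}_n)=2P_{\bs{D}}(\bs{x}_n)-\bs{x}_n$, the recursion~\eqref{eq:par_AAMR2_alg} becomes
\begin{equation*}
\bs{x}_{n+1}=(1-\lambda_n)\bs{x}_n+\lambda_n\left(2\beta J_{(\gamma\bs{B}_{-\bs{q}})}-\Id\right)R_{N_{\bs{D}}}(\bs{x}_n).
\end{equation*}
By~\Cref{prop:strong_op} one has $2\beta J_{(\gamma\bs{B}_{-\bs{q}})}-\Id=R_{\bs{C}}$, where $\bs{C}:=(\gamma\bs{B}_{-\bs{q}})^{(\beta)}$ is maximally monotone (using~\Cref{fact:prop_monotone,prop:strong_op} together with~\Cref{fact:prod_space}) and $\tfrac{1-\beta}{\beta}$-strongly monotone. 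Hence the iteration is exactly the Douglas--Rachford recursion of~\Cref{fact:DR} with step size $1$, for the inner operator $N_{\bs{D}}$ and the outer operator $\bs{C}$.

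Next I would identify $\zer(\bs{C}+N_{\bs{D}})$. As $\bs{C}$ is again a product operator, with coordinates $C_i:=(\gamma(A_i)_{-q})^{(\beta)}$, \Cref{fact:prod_space} gives $\zer(\bs{C}+N_{\bs{D}})=\bs{j}\big(\zer(\sum_i C_i)\big)$. Viewing each $C_i$ as the $\beta$-strengthening of $\gamma(A_i)_{-q}$ and applying~\Cref{prop:par_zeros}, then~\Cref{fact:translation} (using $\sum_i(A_i)_{-q}=(\sum_i A_i)_{-q}$ and that scalar multiples commute with inner perturbations), I obtain
\begin{equation*}
\zer\left(\textstyle\sum_{i=1}^r C_i\right)=\beta\,J_{\frac{\gamma}{r(1-\beta)}(\sum_i A_i)_{-q}}(0)=\beta\left(J_{\frac{\gamma}{r(1-\beta)}\sum_i A_i}(q)-q\right).
\end{equation*}
The hypothesis $q\in\ran\big(\Id+\tfrac{\gamma}{r(1-\beta)}\sum_i A_i\big)$ is equivalent, via the same translation, to $0\in\ran\big(\Id+\tfrac{\gamma}{r(1-\beta)}(\sum_i A_i)_{-q}\big)$, which by~\Cref{prop:par_zeros} is exactly what makes this set nonempty; thus $\zer(\bs{C}+N_{\bs{D}})=\{\bs{j}(\beta(w-q))\}$ with $w:=J_{\frac{\gamma}{r(1-\beta)}\sum_i A_i}(q)$.

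Finally I would invoke~\Cref{fact:DR}. Part~(i) gives $\bs{x}_{n+1}-\bs{x}_n\to0$, hence assertion~(i) coordinatewise. Part~(ii) gives $\bs{x}_n\rightharpoonup\bs{x}^\star=(x_1^\star,\ldots,x_r^\star)$ with $P_{\bs{D}}(\bs{x}^\star)=\bs{j}(\tfrac1r\sum_i x_i^\star)\in\zer(\bs{C}+N_{\bs{D}})$, so $\tfrac1r\sum_i x_i^\star=\beta(w-q)$, which rearranges to the identity in~(ii). The shadow tracked by~\Cref{fact:DR} is $J_{N_{\bs{D}}}(\bs{x}_n)=\bs{p}_n=\bs{j}(p_n)$; since the \emph{outer} factor $\bs{C}$ is strongly monotone, part~(iv) gives strong convergence of $\bs{p}_n$ to $\bs{j}(\beta(w-q))$, i.e.\ $p_n\to\beta(w-q)$ and therefore $q+\tfrac1\beta p_n\to w$, which is~(iii). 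The step demanding most care is precisely this bookkeeping: one must check that the strong monotonicity required by~\Cref{fact:DR}\,(iv) is supplied by the \emph{outer} operator $\bs{C}$ (the inner factor $N_{\bs{D}}$ is not strongly monotone, so the roles are reversed relative to~\Cref{th:par_AAMR}), and then propagate the factor $\beta$ and the perturbation $-q$ through the translation identities so as to recover the normalizing constants $\tfrac1\beta$ and $\tfrac{1}{\beta r}$ and the resolvent parameter $\tfrac{\gamma}{r(1-\beta)}$.
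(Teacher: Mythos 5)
Your proposal is correct and follows essentially the same route as the paper: the paper's (very brief) proof likewise rewrites \eqref{eq:par_AAMR2_alg} in the product space as the Douglas--Rachford recursion $\bs{x}_{n+1}=(1-\lambda_n)\bs{x}_n+\lambda_nR_{(\gamma \bs{B}_{-\bs{q}})^{(\beta)}}R_{N_{\bs{D}}}(\bs{x}_n)$ and then argues as in \Cref{th:AAMR}, using \Cref{prop:par_zeros} together with \Cref{fact:prod_space}\cref{fact:prod_spaceIII} to identify the (singleton) zero set. Your filled-in details—matching the scheme against \Cref{fact:DR} with $N_{\bs{D}}$ as the inner operator, drawing the strong monotonicity needed for \Cref{fact:DR}(iv) from the outer strengthened operator, and propagating the factor $\beta$ and the $-q$ translation to recover the constants in (ii) and (iii)—are exactly the bookkeeping the paper leaves implicit.
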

\begin{proof} After rewriting the iterative scheme in~\eqref{eq:par_AAMR2_alg} as
$$
\bs{x}_{n+1}=(1-\lambda_n)\bs{x}_n+\lambda_nR_{(\gamma \bs{B}_{-\bs{q}})^{(\beta)}}R_{N_{\bs{D}}}(\bs{x}_n),\quad\text{for }n=0,1,2,\ldots,
$$
the proof is analogous to that of~\Cref{th:AAMR}, using \Cref{prop:par_zeros} together with \Cref{fact:prod_space}\cref{fact:prod_spaceIII} instead of \Cref{prop:zeros}.
\end{proof}

\subsection{Numerical experiment}
We conclude with a simple numerical experiment, where we compare the two parallel variants of the AAMR splitting algorithm presented in \Cref{th:par_AAMR,th:par_AAMR2}.
We consider the quadratic best approximation problem of finding the closest point to the origin in the intersection of $N$ balls in $\R^{10}$:
\begin{equation*}
\text{Find } P_{\bigcap_{i=1}^N {B}_i}(0),\quad \text{with } {B}_i=:\{x\in\R^{10} : \|x-c_i\|\leq r_i\},\, i=1,2,\ldots,N.
\end{equation*}
The iteration generated by each of the two algorithms for three balls in $\R^2$ is illustrated in~\Cref{fig:difference}.
\begin{figure}\centering
\subfigure[Splitting algorithm in \Cref{th:par_AAMR}: the sequence $(p_n)_{n=0}^\infty$ converges to $P_{B_1\cap B_2\cap B_3}(0)$]{\hspace{30pt}\scalebox{.44}{\input{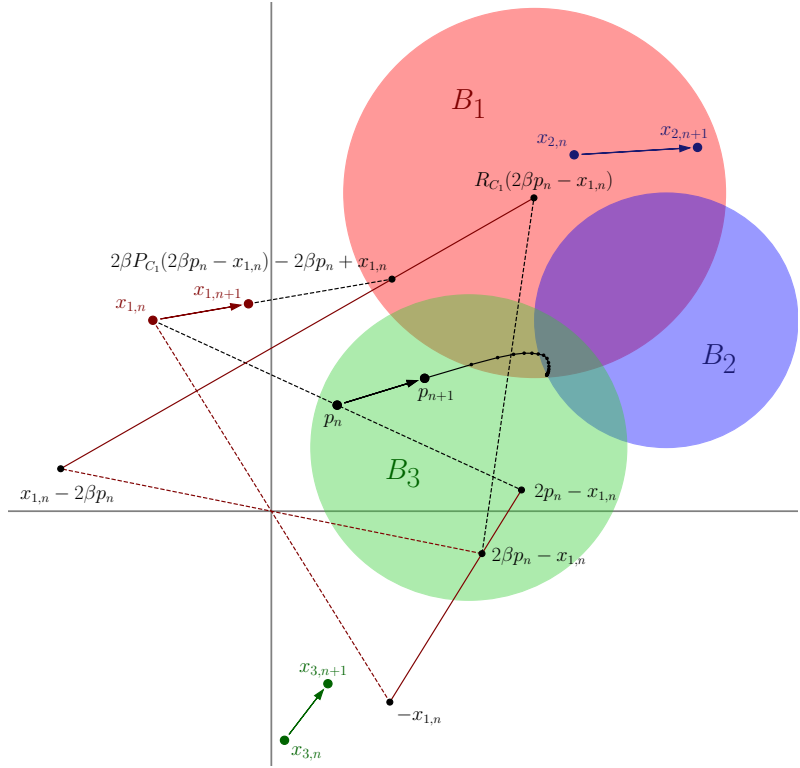}}\hspace{30pt}}
\subfigure[Splitting algorithm in \Cref{th:par_AAMR2}: the sequence $\left(\frac{1}{\beta}p_n\right)_{n=0}^\infty$ converges to $P_{B_1\cap B_2\cap B_3}(0)$]{\hspace{35pt}\scalebox{.44}{\input{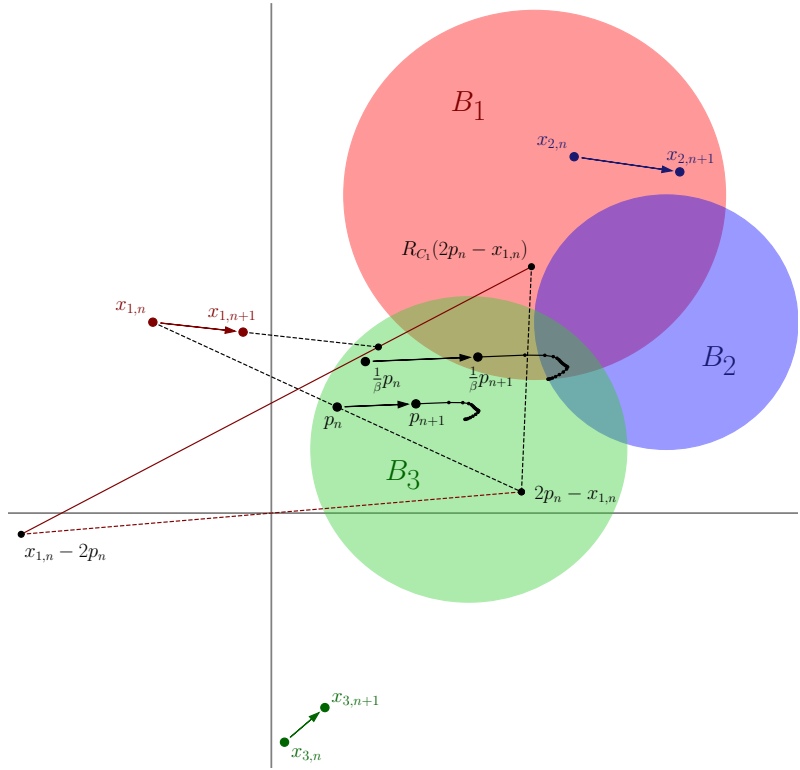}}\hspace{35pt}}
\caption{Illustration of the computation of the iterations of the splitting algorithms proposed, when they are applied to the normal cones of three balls $B_1,B_2,B_3\subset\mathbb{R}^2$, with $q=0$, $\lambda_n=0.4$ and $\beta=0.7$}\label{fig:difference}
\end{figure}

In the experiment, for each value of $N\in\{2,4,6,8,10\}$, $100$ feasible problems were randomly generated as follows. First, we picked a point $z$ with coordinates randomly uniformly generated in the range $[-5, 5]$. Then, for each $i=1,2,\ldots,N$, a point $b_i$ was randomly chosen with coordinates in $[-5,5]$, and the center $c_i$ of each ball $B_i$ was set to $c_i:=z+b_i$. Finally, a radius
$r_i:=\|b_i\|+\alpha_i$ was defined by adding to the center's distance from $z$, a random number $\alpha_i$ uniformly picked from the range $[0.05,0.1]$. In this way, the point $z$ is in the interior of every ball, thus, yielding a consistent best approximation problem for which the convergence of the algorithms is guaranteed.

In our test, we fixed $\lambda_n=0.9$, which seems to be a sensible choice for both algorithms. For each problem and each value of $\beta\in\{0.5,0.505,\ldots,0.99,0.995\}$ (values of $\beta<0.5$ were dominated by $0.5$), both algorithms were run from a random starting point with coordinates in $[-5,5]$. We used a stopping criterion based on the true error; i.e.,  the algorithm in~\Cref{th:par_AAMR} (which we refer to as \emph{original}) was stopped when
\begin{equation*}
\left\|p_n-P_{\bigcap_{i=1}^N {B}_i}(0)\right\|<10^{-6},
\end{equation*}
and the algorithm in~\Cref{th:par_AAMR2} (which we refer to as \emph{alternative}), when
\begin{equation*}
\left\|\frac{1}{\beta}p_n-P_{\bigcap_{i=1}^N {B}_i}(0)\right\|<10^{-6}.
\end{equation*}

The results of the experiment are displayed in~\Cref{fig:experiment}, where we can observe that the behavior of both algorithms is very similar. For each algorithm, there exists an optimal value of $\beta$, which depends on the number of constraints, minimizing the number of iterations needed to converge. The alternative algorithm outperforms the original one for small values of $\beta$, while the opposite occurs for large values. The value of $\beta$ on which the switch takes place increases with the number of constraints. In practice, as this value would be in principle unknown, the alternative parallel algorithm applied with some $\beta\in[0.85,0.95]$ is preferable in this setting.

\begin{figure}[ht!]
\centering
\includegraphics[width=0.8\textwidth]{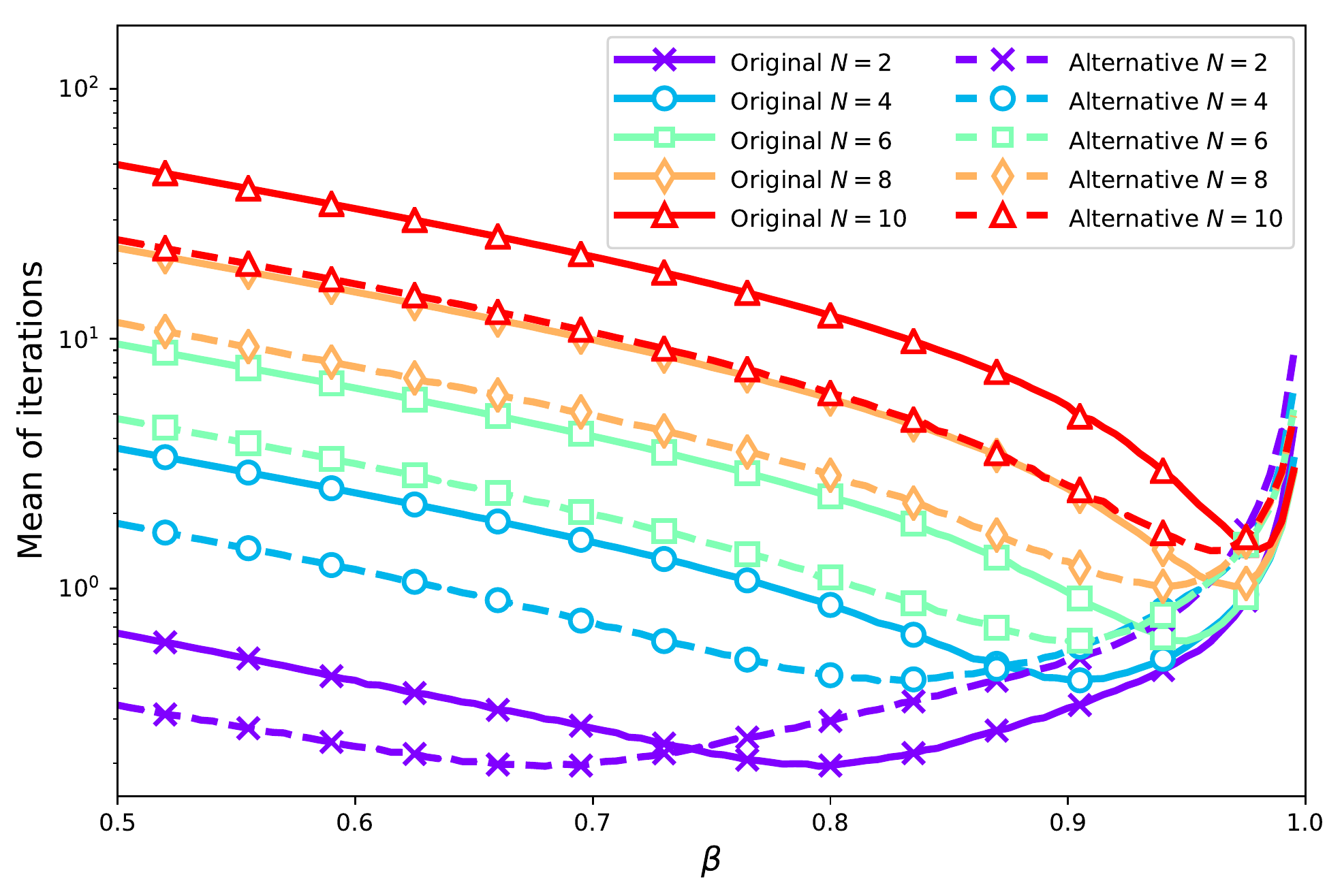}\\[12pt]
\includegraphics[width=0.8\textwidth]{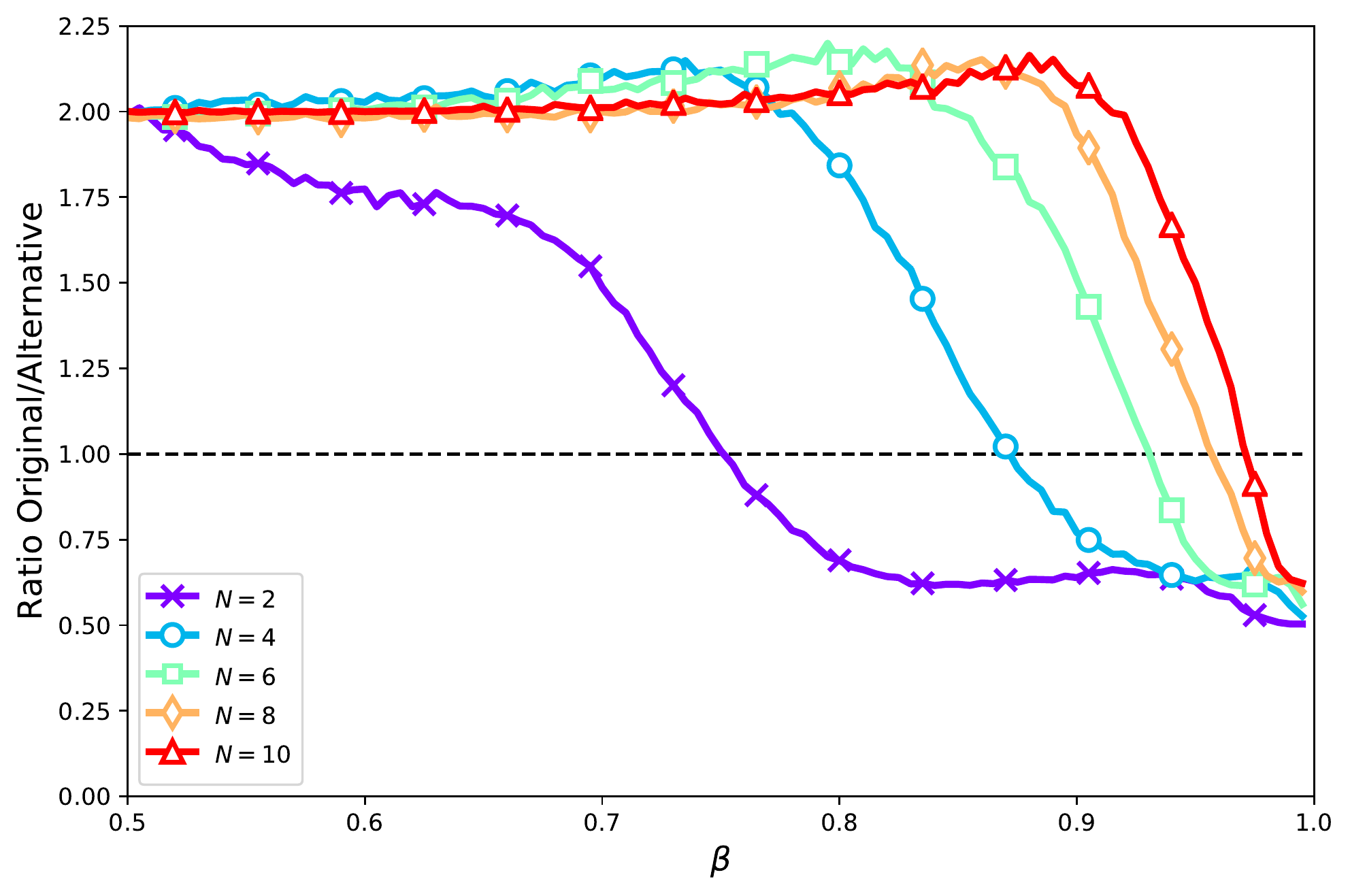}\\[6pt]
\caption{Results of the numerical experiment comparing the algorithms in~\Cref{th:par_AAMR,th:par_AAMR2}. In the top figure, we show the average number of iterations required by each algorithm with respect to the value of $\beta$. In the bottom figure, we show the ratio between the average number of iterations required by the original algorithm  and the alterative one, for each number of constraints with respect to the value of $\beta$.}\label{fig:experiment}
\end{figure}

\paragraph{Acknowledgements} We greatly appreciate the constructive comments of two anonymous reviewers which helped us to improve the paper. This work was partially supported by  Ministerio de Econom\'ia, Industria y Competitividad (MINECO) of Spain  and  European Regional Development Fund (ERDF), grant MTM2014-59179-C2-1-P. FJAA was supported by the Ram\'on y Cajal program by MINECO  and  ERDF (RYC-2013-13327) and RC was supported by MINECO and European Social Fund (BES-2015-073360) under the program ``Ayudas para contratos predoctorales para la formaci\'on de doctores 2015''.

\end{document}